\numberwithin{equation}{section}
\newtheorem{theorem}{Theorem}
\newtheorem{lemma}[theorem]{Lemma}
\newtheorem{proposition}[theorem]{Proposition}
\theoremstyle{remark}
\numberwithin{theorem}{section} \numberwithin{equation}{section}
\numberwithin{figure}{section}
\begin{document}
\title[]{Bailey pairs and quantum $q$-series identites. I. The Classical Identities}

\author{Jehanne Dousse}
\address{Université de Genève, 7--9, rue Conseil Général, 1205 Genève, Switzerland}
\email{jehanne.dousse@unige.ch}

\author{Jeremy Lovejoy}
\address{CNRS, Universit{\'e} Paris Cit\'e, B\^atiment Sophie Germain, Case Courier 7014,
8 Place Aur\'elie Nemours, 75205 Paris Cedex 13, FRANCE}
\email{lovejoy@math.cnrs.fr}

\date{\today}
\subjclass[2020]{33D15}
\keywords{$q$-series, Bailey pairs, roots of unity, quantum $q$-series identities}

\dedicatory{For Mourad Ismail on the occasion of his 80th birthday}

\begin{abstract}
We use Bailey pairs to prove $q$-series identities at roots of unity due to Cohen and Bryson--Ono--Pitman--Rhoades.    The proofs use Bailey pairs with quadratic forms developed in the study of mock theta functions.   In addition to the standard Bailey lemma, we require some changes-of-base established by Bressoud--Ismail--Stanton.    We then embed the identities in infinite families using the Bailey chain.
\end{abstract}

\maketitle

\section{Introduction}
Recall the standard $q$-series notation,
\begin{equation*}
(a)_n = (a;q)_n = \prod_{k=0}^{n-1} (1-aq^k),
\end{equation*}
valid for non-negative integers $n$.   In a study of $q$-series from Ramanujan's lost notebook, Cohen \cite{Co1} proved that for any root of unity $q$,
\begin{equation} \label{Cohen}
\sum_{n \geq 0} (-1)^n(q^{-1};q^{-1})_n = \sum_{n \geq 0} (q^2;q^2)_{n}q^{n+1}.
\end{equation}
This is not an identity between complex functions in any classical sense.   But at any root of unity $\xi$ both series truncate, leaving a polynomial identity in $\xi$.   For example at $q=i$ the left-hand side is
\begin{equation*}
1 - (1+i)+ (1+i)(1-i^2) - (1+i)(1-i^2)(1+i^3) = -2+i,
\end{equation*}
while the right-hand side is
\begin{equation*}
i + i^2(1-i^2) = -2+i.
\end{equation*}
Cohen also left two similar identities as exercises for the reader, 
 
\begin{numcases} 
{\sum_{n \geq 0} (q^2;q^2)_nq^{n+1} =}
 - \sum_{n \geq 0} (-q^{-1};q^{-1})_n, & \text{if $q$ is an even root of unity}, \label{Cohenex1} \\
\sum_{n \geq 0} (q;q^2)_n, & \text{if $q$ is an odd root of unity}. \label{Cohenex2}
\end{numcases}
Later, in a study of quantum modular forms, Bryson, Ono, Pitman, and Rhoades \cite{Br-On-Pi-Rh1} proved another elegant identity at roots of unity,
\begin{equation} \label{BOPR}
\sum_{n \geq 0} (q^{-1};q^{-1})_n = \sum_{n \geq 0} (q)_n^2q^{n+1}.
\end{equation} 


These early examples of Cohen and Bryson--Ono--Pitman--Rhoades laid the foundation for future work on $q$-series identites at roots of unity, also known as \emph{quantum $q$-series identities}.    While the original proofs  are based on elementary recursions, the identities can also be deduced using classical $q$-series transformations, as can a great many other such identities \cite{Lo4}.    The identity \eqref{BOPR} can also be deduced using the colored Jones polynomial of the trefoil knot \cite{Hi-Lo1}, and this observation led to many further families of identities at roots of unity, including several generalizations of \eqref{BOPR} \cite{Hi-Lo1, Lo-Os1, Lo-Os2}.   Quantum $q$-series identities also arise as a consequence of so-called ``strange identities" for quantum modular forms \cite{Lo3}.

In this paper we show how Bailey pairs may be used to prove \eqref{Cohen} -- \eqref{BOPR}.    Our proofs use Bailey pairs with quadratic forms 
developed
in the study of mock theta functions.    We use these pairs to express the relevant $q$-series as truncated indefinite theta series.   See Propositions \ref{BOPRprop} -- \ref{Cohenex2prop}.    Surprisingly, for two of the four identities a key role is played by changes-of-base for Bailey pairs due to Bressoud--Ismail--Stanton \cite{Br-Is-St1}.

Once we have understood the identities using Bailey pairs, they can be embedded in infinite families using the machinery of the Bailey chain.    For example, we will show the following.
\begin{theorem} \label{theorem1}
For $m \geq 1$ and $q$ any root of unity we have
\begin{equation} \label{theorem1eq}
\begin{aligned}
\sum_{n_m \geq \dots \geq n_1 \geq 0}& (q)_{n_m}(-1)^{n_1}q^{-\binom{n_1+1}{2}}\prod_{i=1}^{m-1} q^{n_i^2+n_i} \begin{bmatrix} n_{i+1} \\ n_i \end{bmatrix} \\
&=  \sum_{n_m \geq \dots \geq n_1 \geq 0} (q)_{n_m}(q)_{n_1}q^{n_m+1}\prod_{i=1}^{m-1} q^{n_i^2+n_i} \begin{bmatrix} n_{i+1} \\ n_i \end{bmatrix}.
\end{aligned}
\end{equation}
\end{theorem}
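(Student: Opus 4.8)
\emph{Proof proposal.} The plan is to identify the case $m=1$ of \eqref{theorem1eq} with the Bryson--Ono--Pitman--Rhoades identity \eqref{BOPR} and then to propagate it to all $m$ by running the Bailey chain. When $m=1$ the product over $i$ is empty and $n_m=n_1$, so the two sides read $\sum_{n\geq 0}(q)_n(-1)^nq^{-\binom{n+1}{2}}$ and $\sum_{n\geq 0}(q)_n^2q^{n+1}$; since $(q^{-1};q^{-1})_n=(-1)^nq^{-\binom{n+1}{2}}(q)_n$, these are exactly the two sides of \eqref{BOPR}, whose Bailey-pair proof (relative to $a=q$) is furnished by Proposition \ref{BOPRprop}. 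Concretely, \eqref{theorem1eq} will be recast, after the reductions below, as the assertion that $\sum_{n\geq 0}(q)_n^2\,\beta^{(m-1)}_n=\sum_{n\geq 0}(q)_n^2q^{n+1}\,\widetilde\beta^{(m-1)}_n$, where $(\alpha,\beta)$ and $(\widetilde\alpha,\widetilde\beta)$ are the two Bailey pairs underlying \eqref{BOPR}, with $\beta_n=(q^{-1};q^{-1})_n/(q)_n^2$ and $\widetilde\beta_n=1$.

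First I would iterate the standard Bailey lemma with $\rho_1,\rho_2\to\infty$ a total of $m-1$ times, starting from each of these pairs relative to $a=q$. Since here $a^nq^{n^2}=q^{n^2+n}$, one step sends $\alpha_n\mapsto q^{n^2+n}\alpha_n$ and $\beta_n\mapsto\sum_j q^{j^2+j}\beta_j/(q)_{n-j}$, so after $m-1$ steps $\alpha_n$ is multiplied by $q^{(m-1)(n^2+n)}$ and $\beta^{(m-1)}_{n_m}=\sum_{n_m\geq\cdots\geq n_1}\big(\prod_{i=1}^{m-1}q^{n_i^2+n_i}/(q)_{n_{i+1}-n_i}\big)\beta_{n_1}$. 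The telescoping identity $\prod_{i=1}^{m-1}1/(q)_{n_{i+1}-n_i}=\frac{(q)_{n_1}}{(q)_{n_m}}\prod_{i=1}^{m-1}\begin{bmatrix}n_{i+1}\\n_i\end{bmatrix}$ converts the denominators into the Gaussian binomials of \eqref{theorem1eq}; the surviving factor $(q)_{n_1}$ combines with $\beta_{n_1}$ to produce the seeds $(-1)^{n_1}q^{-\binom{n_1+1}{2}}$ and $(q)_{n_1}$, while the prefactors $(q)_{n_m}^2$ and $(q)_{n_m}^2q^{n_m+1}$ reduce to the explicit weights $(q)_{n_m}$ and $(q)_{n_m}q^{n_m+1}$ displayed in the theorem. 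This yields exactly the two multisums in \eqref{theorem1eq}.

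It remains to see that the equality of the two sides survives the iteration, and here I would use that the chain acts on the $\alpha$-side by the single uniform factor $q^{(m-1)(n^2+n)}$, indexed by the very variable that indexes the indefinite theta series of Proposition \ref{BOPRprop}. Interchanging the order of summation (equivalently, inserting the defining relation $\beta^{(m-1)}_N=\sum_r q^{(m-1)(r^2+r)}\alpha_r\big/\big((q)_{N-r}(q^2)_{N+r}\big)$ and summing the resulting universal inner sum over $N$), each side of \eqref{theorem1eq} becomes $\sum_{r\geq 0}q^{(m-1)(r^2+r)}\theta_r$ with the same summand $\theta_r$ coming from the common truncated theta series of \eqref{BOPR}. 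Because the inserted factor $q^{(m-1)(r^2+r)}$ is identical on the two sides, equality for $m=1$ forces equality for every $m\geq 1$.

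The hard part will be the analysis at roots of unity. None of the series converges classically; each is finite only because $(q)_{n_m}$ truncates it, and the infinite products (or the degenerate parameter $aq/\rho_1\rho_2=1$ that appears if one tries to realize the weight $(q)_n^2$ directly through Bailey's lemma) that would arise in the generic final summation are unavailable. One must therefore carry out the inner summation on the level of the truncated polynomial identities, and---most importantly---verify that the two theta representations of \eqref{BOPR} agree \emph{term by term} in the index $r$, not merely as total sums, since it is precisely the term-by-term matching that is preserved when the factor $q^{(m-1)(r^2+r)}$ is inserted. The residual bookkeeping (the specialization $a=q$, the empty product and $n_m=n_1$ at $m=1$, and the placement of the outer weights) is routine.
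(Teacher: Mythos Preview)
Your iteration of the $b,c\to\infty$ Bailey step on the seed $\widetilde\beta_n=1$ correctly produces the right-hand multisum of \eqref{theorem1eq}, and this is exactly what the paper does for that side. The gap is on the other side. Your bootstrap from $m=1$ to general $m$ rests on the assertion that the two truncated theta representations of \eqref{BOPR} ``agree term by term in the index $r$,'' so that inserting a common factor $q^{(m-1)(r^2+r)}$ on each $\alpha$-side preserves equality. That assertion is false: by Proposition~\ref{BOPRprop} the $\alpha$-side representation of $\sum_k(q)_k$ carries the exponent $-k^2+j(3j+1)/2$, while that of $\sum_k(q)_k^2q^{k+1}$ carries $+k^2-j(3j+1)/2$; the two are $q\mapsto q^{-1}$ images of each other, not equal summand by summand in $k$. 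A uniform $b,c\to\infty$ iteration on both pairs therefore does not carry the $m=1$ identity to $m>1$. (Relatedly, the pair the paper actually uses for the left of \eqref{BOPR} is \eqref{posdefpair2} with $\beta_k=q^{-k}/(q)_k$, fed through the framework \eqref{b0cq}; it is not your $\beta_n=(-1)^nq^{-\binom{n+1}{2}}/(q)_n$, whose $\alpha$ you never identify.)

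The paper's remedy is an asymmetric iteration. For the right-hand side it applies Lemma~\ref{Baileylemmabcinfty} $m-1$ times to \eqref{indefpair1}, as you propose. For the left-hand side it instead applies the dual step of Lemma~\ref{Baileylemmabc0} (the $b,c\to 0$ specialization) $m-1$ times to \eqref{posdefpair2}, which multiplies $\alpha_k$ by $q^{-(m-1)(k^2+k)}$ rather than $q^{+(m-1)(k^2+k)}$; the resulting identity is then sent through $q\mapsto q^{-1}$, which simultaneously turns the multisum into the displayed left-hand side of \eqref{theorem1eq} and flips the sign of the theta exponent so that it matches \eqref{theorem1halfway} for every $m$. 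In short, the $q\leftrightarrow q^{-1}$ duality visible at $m=1$ does not survive a symmetric iteration; the inversion has to be built into the chain itself.
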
   

\begin{theorem} \label{theorem2}
For $m \geq 1$ and $q$ any even root of unity we have
\begin{equation} \label{theorem2eq}
\begin{aligned}
\sum_{n_m \geq \dots \geq n_1 \geq 0}& (-q)_{n_m}(q)_{n_1}q^{n_m+1}\prod_{i=1}^{m-1} q^{n_i^2+n_i} \begin{bmatrix} n_{i+1} \\ n_i \end{bmatrix} \\
&=  -\sum_{n_m \geq \dots \geq n_1 \geq 0} (-q)_{n_m}(-1)^{n_m+ n_1}q^{- \binom{n_1+1}{2}}\prod_{i=1}^{m-1} q^{n_i^2+n_i} \begin{bmatrix} n_{i+1} \\ n_i \end{bmatrix}.
\end{aligned}
\end{equation}
\end{theorem}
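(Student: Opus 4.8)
The plan is to recognize each side of \eqref{theorem2eq} as the output of applying the limiting Bailey lemma (i.e.\ the Bailey chain, the $\rho_1,\rho_2\to\infty$ specialization) $m-1$ times with $a=q$ to an explicit base Bailey pair, and then to reduce the whole statement to its $m=1$ instance, which is the classical identity \eqref{Cohenex1}. First I would simplify the products. Using
$\prod_{i=1}^{m-1}\begin{bmatrix}n_{i+1}\\n_i\end{bmatrix}=\frac{(q)_{n_m}}{(q)_{n_1}\prod_{i=1}^{m-1}(q)_{n_{i+1}-n_i}}$
together with $(-q)_{n_m}(q)_{n_m}=(q^2;q^2)_{n_m}$, both sides collapse to sums of the shape $\sum_{n_m\ge0}(q^2;q^2)_{n_m}\,w_{n_m}\,\beta^{(m-1)}_{n_m}$, where $\beta^{(m-1)}$ is the $(m-1)$-fold Bailey iterate of a base sequence $\beta$ (each chain step with $a=q$ contributing exactly the factor $q^{n_i^2+n_i}/(q)_{n_{i+1}-n_i}$). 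Concretely, the left side has base $\beta^L_n=1$ and top weight $w^L_n=q^{n+1}$, while the right side is $-1$ times the analogous sum with base $\beta^R_n=(-1)^n q^{-\binom{n+1}{2}}/(q)_n$ and top weight $w^R_n=(-1)^n$. The $m=1$ case is precisely \eqref{Cohenex1}, after using $(-q)_n q^{-\binom{n+1}{2}}=(-q^{-1};q^{-1})_n$.

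Next I would pass to the $\alpha$-side. Let $(\alpha^L_n,\beta^L_n)$ and $(\alpha^R_n,\beta^R_n)$ be the base Bailey pairs relative to $a=q$ singled out above; I expect their $\alpha$-sequences to be quadratic forms coming from standard mock-theta Bailey pairs after a Bressoud--Ismail--Stanton change of base, the base-$q^2$ flavour being signalled by the factors $(q^2;q^2)_{n_m}$. Since the Bailey chain acts on the $\alpha$-side simply by $\alpha_k\mapsto q^{(m-1)(k^2+k)}\alpha_k$, the defining relation for the iterated pair, after interchanging the terminating sums at a root of unity, gives
\[
\sum_{n_m\ge0}(q^2;q^2)_{n_m}\,w_{n_m}\,\beta^{(m-1)}_{n_m}=\sum_{k\ge0}q^{(m-1)(k^2+k)}\,\alpha_k\,c_k(w),\qquad c_k(w):=\sum_{N\ge k}\frac{(q^2;q^2)_N\,w_N}{(q)_{N-k}(q^2)_{N+k}}.
\]
The key point is that $c_k(w)$ does not depend on $m$, so on both sides all of the $m$-dependence sits in the single common factor $q^{(m-1)(k^2+k)}$.

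It therefore suffices to prove the $m=1$ identity in its refined, termwise form $\alpha^L_k\,c_k(w^L)=-\alpha^R_k\,c_k(w^R)$ for each $k$; multiplying this by $q^{(m-1)(k^2+k)}$ and summing over $k$ then yields \eqref{theorem2eq} for all $m\ge1$ at once. This termwise statement is exactly what the relevant Proposition (one of Propositions \ref{BOPRprop}--\ref{Cohenex2prop}) provides: each side is identified with the $k$-th term of one common truncated indefinite theta series, for $q$ an even root of unity. The hard part will be precisely here — evaluating the inner weighted sums $c_k(w^L)$ and $c_k(w^R)$ in closed form (via terminating $q$-Chu--Vandermonde / $q$-Gauss summations), pinning down the two quadratic-form sequences $\alpha^L$ and $\alpha^R$, and verifying that the change of base aligns the $(q^2;q^2)$ factors so that the products $\alpha^L_k c_k(w^L)$ and $-\alpha^R_k c_k(w^R)$ genuinely coincide. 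Throughout I would use the even-root-of-unity hypothesis only to guarantee that every series terminates and that the indefinite theta series is the correct finite object; this is the sole place where the parity of the root of unity enters.
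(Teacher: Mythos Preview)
Your strategy of factoring out the Bailey-chain contribution $q^{(m-1)(k^2+k)}$ and reducing to $m=1$ is attractive, but the proposal has a real gap at the point where you invoke Propositions~\ref{BOPRprop}--\ref{Cohenex2prop}. Those propositions do \emph{not} give a termwise identity of the form $\alpha^L_k\,c_k(w^L)=-\alpha^R_k\,c_k(w^R)$; they evaluate the \emph{full} sums as truncated indefinite theta series by taking a limit of a rational function identity via l'H\^opital's rule. In particular, the indefinite theta terms arise only after combining the $\alpha_k$ with the factor $(q^{1-n})_k/(q^{1+n})_k$ coming from Lemma~\ref{keylemma1} and then differentiating the whole expression; there is no step that identifies your inner sums $c_k(w^L)$ and $c_k(w^R)$ with anything closed-form. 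You would have to evaluate those $_2\phi_1$-type sums yourself, and there is no $q$-Gauss/Chu--Vandermonde summation that applies directly here. A second, related issue is that your base pair on the right, $\beta^R_n=(-1)^n q^{-\binom{n+1}{2}}/(q)_n$, is not one of the listed mock-theta pairs, and the Bressoud--Ismail--Stanton change of base plays no role in this theorem (it is used only for Theorems~\ref{theorem3} and~\ref{theorem4}); so you would also need to supply and justify the corresponding $\alpha^R$ from scratch.

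The paper proceeds differently and sidesteps both difficulties. For the left-hand side it iterates the pair \eqref{indefpair1} $m-1$ times via Lemma~\ref{Baileylemmabcinfty}, inserts the result into the $b\to0$, $c=-q$ specialization \eqref{b0c-q}, and evaluates at an even root of unity using l'H\^opital on the prefactor $(-q)_{n-1}/(q)_n$. For the right-hand side it does \emph{not} use the same chain: it instead iterates the pair \eqref{posdefpair2} $m-1$ times via Lemma~\ref{Baileylemmabc0} (the $b,c\to0$ step, which introduces the signs and the $q^{-\binom{n_1+1}{2}}$), inserts that into \eqref{b0c-q}, evaluates, and then performs $q\mapsto 1/q$. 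Both evaluations land on the same indefinite theta expression $\tfrac14\sum_{k,j}{\rm sgn}(k)(-1)^{k+j}q^{mk^2+(m-1)k-j(3j+1)/2}$, and comparing them gives the theorem. Note also that the even-root-of-unity hypothesis is not used merely for termination: it is essential in the l'H\^opital step, where one needs $(-q)_{n-1}\to0$ and $\tfrac{d}{dq}(-q)_{n-1}\big|_{q=\zeta_n}=n^2/4\zeta_n$.
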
 
\noindent Here we have used the $q$-binomial coefficient (or Gaussian polynomial),
\begin{equation*} \label{qbinomial}
\begin{bmatrix} n \\ k \end{bmatrix} = \begin{bmatrix} n \\ k \end{bmatrix}_q =
\begin{cases}
\frac{(q)_n}{(q)_{n-k}(q)_k}, & \text{if $0 \leq k \leq n$}, \\
0, & \text{otherwise}.
\end{cases}
\end{equation*}
Note that the cases $m=1$ of \eqref{theorem1eq} and \eqref{theorem2eq} are \eqref{BOPR} and \eqref{Cohenex1}, respectively.   
This follows from the relation
\begin{equation} \label{inverseqfac}
(q^{-1};q^{-1})_n = (q)_n(-1)^nq^{-\binom{n+1}{2}}.
\end{equation}

Generalizations of \eqref{Cohen} and \eqref{Cohenex2} are contained in the following two results.   Here the $q$-series don't always truncate naturally, so the multisums come with an upper bound.

\begin{theorem} \label{theorem3}
For $m \geq 1$ and $q$ any primitive $N$th root of unity we have
\begin{equation*} \label{theorem3eq}
\begin{aligned}
&\sum_{N-1 \geq n_{2m-1} \geq \cdots  \geq n_1 \geq 0} (-q^{n_1+1})_{n_{2m-1}-n_1} (-1)^{n_{2m-1}} (q)_{n_1} \prod_{i=1}^{2m-2} q^{n_i^2+n_i} \begin{bmatrix} n_{i+1} \\ n_i \end{bmatrix}  \\
&= \sum_{N-1 \geq n_m \geq \cdots \geq n_1 \geq 0} (-q^{2n_1+2})_{2n_m-2n_1}(-1)^{n_m}q^{-(n_m+1)^2}(q^2;q^2)_{n_1} \prod_{i=1}^{m-1} q^{2n_i^2+2n_i} \begin{bmatrix} n_{i+1} \\ n_i \end{bmatrix}_{q^2}.
\end{aligned}
\end{equation*}
\end{theorem}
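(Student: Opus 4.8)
The plan is to realize both sides as weighted sums over a single Bailey pair: the left-hand side by iterating the ordinary ($q$-base) Bailey chain, the right-hand side by first performing a Bressoud--Ismail--Stanton change of base to $q^2$ and then iterating the $q^2$-base chain. Concretely, let $(\alpha_n,\beta_n)$ be the Bailey pair relative to $a=q$ with $\beta_n=1/(-q;q)_n$ that underlies the proof of \eqref{Cohen}, and let $(\widetilde\alpha_n,\gamma_n)$, relative to $a=q^2$ in base $q^2$ with $\gamma_n=1/(-q;q)_{2n+1}$, be its change-of-base partner. Checking the empty-product case confirms the base: at $m=1$ the two sides read $\sum_{n=0}^{N-1}(-1)^n(q)_n$ and $\sum_{n=0}^{N-1}(-1)^nq^{-(n+1)^2}(q^2;q^2)_n$, which is exactly \eqref{Cohen} after $q\mapsto q^{-1}$ (via \eqref{inverseqfac}). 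The proof of \eqref{Cohen} shows that these two $\beta$-sums coincide because both collapse, through the Bailey relation, to the same truncated indefinite theta series in the $\alpha_k$; the task is to propagate this equality through the chain.

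First I would rewrite the left-hand side in Bailey-chain form. Telescoping the Gaussian polynomials gives $\prod_{i=1}^{2m-2}\begin{bmatrix}n_{i+1}\\n_i\end{bmatrix}=\frac{(q)_{n_{2m-1}}}{(q)_{n_1}}\prod_{i=1}^{2m-2}\frac1{(q)_{n_{i+1}-n_i}}$, while the shifted Pochhammer factor is $(-q^{n_1+1})_{n_{2m-1}-n_1}=\frac{(-q;q)_{n_{2m-1}}}{(-q;q)_{n_1}}$. Using $(q)_n(-q;q)_n=(q^2;q^2)_n$, these combine so that the left-hand side becomes $\sum_{n_{2m-1}=0}^{N-1}(q^2;q^2)_{n_{2m-1}}(-1)^{n_{2m-1}}\beta^{(2m-2)}_{n_{2m-1}}$, where $\beta^{(2m-2)}$ is the $(2m-2)$-fold iterate of $\beta$ under the base-$q$ chain (each step carrying $q^{n_i^2+n_i}=a^{n_i}q^{n_i^2}$ with $a=q$). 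By the iterated Bailey relation this equals the $m=1$ $\alpha$-sum with each $\alpha_k$ multiplied by $q^{(2m-2)(k^2+k)}$.

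Then I would treat the right-hand side the same way in base $q^2$. Telescoping the $q^2$-binomials gives $\frac{(q^2;q^2)_{n_m}}{(q^2;q^2)_{n_1}}\prod_{i=1}^{m-1}\frac1{(q^2;q^2)_{n_{i+1}-n_i}}$, and $(-q^{2n_1+2})_{2n_m-2n_1}=\frac{(-q;q)_{2n_m+1}}{(-q;q)_{2n_1+1}}$, so the right-hand side becomes $\sum_{n_m=0}^{N-1}(q^2;q^2)_{n_m}(-1)^{n_m}q^{-(n_m+1)^2}(-q;q)_{2n_m+1}\,\gamma^{(m-1)}_{n_m}$, with $\gamma^{(m-1)}$ the $(m-1)$-fold iterate of $\gamma$ under the base-$q^2$ chain (each step carrying $q^{2n_i^2+2n_i}=a^{n_i}(q^2)^{n_i^2}$ with $a=q^2$). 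By the base-$q^2$ Bailey relation this equals the $m=1$ $\alpha$-sum with each $\widetilde\alpha_k$ multiplied by $q^{2(m-1)(k^2+k)}$.

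The punchline is that the two chain factors agree identically, $q^{(2m-2)(k^2+k)}=q^{2(m-1)(k^2+k)}$, so that running $2m-2$ base-$q$ steps on $(\alpha,\beta)$ and $m-1$ base-$q^2$ steps on $(\widetilde\alpha,\gamma)$ multiplies the two (equal) $m=1$ $\alpha$-sums by the \emph{same} weight; hence their equality, which is \eqref{Cohen}, persists for every $m$, with the bounds $n\le N-1$ recording the truncation forced by the vanishing of $(q)_{n_1}$ and $(q^2;q^2)_{n_1}$ at a primitive $N$th root of unity. I expect the main obstacle to be the bookkeeping of the change of base: verifying that the Bressoud--Ismail--Stanton transformation genuinely intertwines two base-$q$ steps with one base-$q^2$ step and matches $\widetilde\alpha_k$ with $\alpha_k$ after absorbing the residual factors $q^{-(n_m+1)^2}$ and $(-q;q)_{2n_m+1}$, so that the two $\alpha$-sums are literally equal rather than merely proportional, together with the analytic care needed to pass from the formal Bailey-chain manipulation to a genuine polynomial identity at roots of unity.
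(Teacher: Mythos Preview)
Your plan is essentially the paper's proof: iterate the Bailey chain on \eqref{indefpair2} (base $q$, $2m-2$ steps) and on \eqref{indefpair4} with $q\to q^2$ (base $q^2$, $m-1$ steps), feed the results into the limiting identities \eqref{binftyc-q} and Lemma~\ref{keylemma2} respectively, and observe that both collapse to the same truncated indefinite theta because the chain weights $q^{(2m-2)(k^2+k)}=q^{2(m-1)(k^2+k)}$ coincide. The paper does this by computing both sides explicitly (equations \eqref{theorem3proofbigeq1} and \eqref{theorem3proofbigeq2}) and then specializing $m\mapsto 2m-1$ in the former; your ``reduce to the $m=1$ case and note the chain factors agree'' is a legitimate shortcut of the same argument.

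One conceptual point needs correction. The pair $(\widetilde\alpha,\gamma)$ is \emph{not} the Bressoud--Ismail--Stanton image of $(\alpha,\beta)$: \eqref{indefpair2} and \eqref{indefpair4} are independent Bailey pairs, not related by the change of base. The BIS transformation enters only through Lemma~\ref{keylemma2}, which supplies the weight $q^{-(n_m+1)^2}(-q^2)_{2n_m}$ on the right-hand side and packages the $\alpha$-side so that its exponent $-\binom{k+1}{2}+2k^2+k$ equals the $k(3k+1)/2$ coming from \eqref{indefpair2} via \eqref{binftyc-q}. So the matching of $\widetilde\alpha_k$ with $\alpha_k$ that you anticipate is not mediated by BIS but is a numerical coincidence between two separately chosen pairs and two separately chosen limiting formulas; you will have to verify it directly, as the paper does. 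Two smaller issues: your $\gamma_n$ is off by a harmless factor $1+q$ (the pair \eqref{indefpair4} at $q\to q^2$ has $\beta_k=1/(-q^2;q)_{2k}$, not $1/(-q;q)_{2k+1}$), and the root-of-unity analysis---l'H\^opital, the odd/even dichotomy for the prefactor $(-q)_{n-1}/(q)_n$, and the separate truncation argument for even $N$---is nontrivial and must be carried out as in the paper rather than deferred.
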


\begin{theorem} \label{theorem4}
For $m \geq 1$ and $q$ any primitive odd $N$th root of unity we have
\begin{equation*} \label{theorem4eq}
\begin{aligned}
&\sum_{N-1 \geq n_m \geq \cdots \geq n_1 \geq 0} (-q^{2n_1+2})_{2n_m-2n_1}(-1)^{n_m}q^{-(n_m+1)^2}(q^2;q^2)_{n_1} \prod_{i=1}^{m-1} q^{2n_i^2+2n_i} \begin{bmatrix} n_{i+1} \\ n_i \end{bmatrix}_{q^2} \\
&= \sum_{N-1 \geq n_m \geq \cdots \geq n_1 \geq 0} (-q^{2n_1+1})_{2n_m-2n_1}(-1)^{n_m}q^{-n_m^2}(q;q^2)_{n_1} \prod_{i=1}^{m-1} q^{2n_i^2+2n_i} \begin{bmatrix} n_{i+1} \\ n_i \end{bmatrix}_{q^2}.
\end{aligned}
\end{equation*} 
\end{theorem}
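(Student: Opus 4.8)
The plan is to prove Theorem~\ref{theorem4} by lifting its $m=1$ case, which is exactly the classical identity \eqref{Cohenex2}: when $n_m=n_1$ the coupling factors $(-q^{2n_1+2})_{2n_m-2n_1}$ and $(-q^{2n_1+1})_{2n_m-2n_1}$ both reduce to the empty product $1$, and the two sides collapse to the single-sum forms $\sum_{n_1}(-1)^{n_1}q^{-(n_1+1)^2}(q^2;q^2)_{n_1}$ and $\sum_{n_1}(-1)^{n_1}q^{-n_1^2}(q;q^2)_{n_1}$ that Proposition~\ref{Cohenex2prop} identifies with the two members of \eqref{Cohenex2} at an odd root of unity. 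The engine of the lift is the observation that the left-hand side of Theorem~\ref{theorem4} is identical to the right-hand side of Theorem~\ref{theorem3}, so the content here is precisely the generalized form of the second equality $\sum_{n\ge0}(q^2;q^2)_nq^{n+1}=\sum_{n\ge0}(q;q^2)_n$.

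First I would recall from Proposition~\ref{Cohenex2prop} that, at an odd root of unity, both base series are realized through one and the same truncated indefinite theta series by means of two Bailey pairs with a common $\alpha$-data; the Bressoud--Ismail--Stanton change of base \cite{Br-Is-St1} is what recasts the naturally base-$q$ pairs into the base-$q^2$ pairs whose dilation matches the $q^2$-scaled shape of the multisums, and it is this change of base that deposits the connecting factors $(-q^{2n_1+2})_{2n_m-2n_1}$ and $(-q^{2n_1+1})_{2n_m-2n_1}$, with the $+2$ versus $+1$ shift distinguishing the two sides. I would then iterate the Bailey lemma in its $\rho_1,\rho_2\to\infty$ form with $(q,a)$ replaced by $(q^2,q^2)$: each step multiplies the $\alpha$-data by the universal factor $q^{2n^2+2n}$ and convolves the $\beta$-data, so that after $m-1$ steps and the standard $(q^2;q^2)$ normalization both sides carry exactly the product $\prod_{i=1}^{m-1}q^{2n_i^2+2n_i}\begin{bmatrix} n_{i+1}\\ n_i\end{bmatrix}_{q^2}$ with the innermost $\beta_{n_1}$ retained. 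Because the chain dilates the common $\alpha$-data in a manner that does not distinguish the two sides, the base equality of Proposition~\ref{Cohenex2prop} propagates to every $m$: each multisum equals the same $(m-1)$-fold dilated truncated indefinite theta series, and hence they are equal to each other. The bound $N-1\ge n_m\ge\cdots\ge n_1\ge0$ simply records the vanishing of the relevant $(q^2;q^2)$-factors at the primitive $N$th root of unity, which is what renders each truncated multisum finite.

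The main obstacle is the bookkeeping at the change-of-base step: one must verify that the Bressoud--Ismail--Stanton transformation produces precisely the factors $(-q^{2n_1+2})_{2n_m-2n_1}$ and $(-q^{2n_1+1})_{2n_m-2n_1}$ attached to the outer index $n_m$ while the dilated Gaussian polynomials attach to the inner indices, and that this step commutes correctly with the iterated base-$q^2$ chain so that the propagation argument is valid. A secondary but essential point, exactly the feature that separates Theorem~\ref{theorem4} from Theorem~\ref{theorem3}, is to confirm that the odd-$N$ hypothesis is what is needed both for the Bailey pair underlying $\sum_{n\ge0}(q;q^2)_n$ to be available and for the truncation to close up, mirroring the restriction to odd roots of unity already present in \eqref{Cohenex2}.
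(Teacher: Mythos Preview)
Your plan is the paper's own argument: iterate Lemma~\ref{Baileylemmabcinfty} $m-1$ times with $q\mapsto q^2$ on the pair \eqref{eq:BP1.4}, insert the result into the Bressoud--Ismail--Stanton identity of Lemma~\ref{keylemma3}, evaluate at a primitive odd $N$th root of unity via l'H\^opital, and compare the resulting truncated indefinite theta sum with the one already obtained in \eqref{theorem3proofbigeq2} for the left-hand side.

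One point needs correcting, and it is exactly the ``bookkeeping obstacle'' you flag. The two Bailey pairs do \emph{not} have common $\alpha$-data: \eqref{indefpair4} carries $\dfrac{1-q^{k+1/2}}{1-q^{1/2}}\,q^{k^2+k/2}$ while \eqref{eq:BP1.4} carries $\dfrac{1-q^{2k+1}}{1-q}\,q^{k^2}$, and at $q\mapsto q^2$ their ratio is $\dfrac{1+q}{1+q^{2k+1}}\,q^{k}$. What makes the proof work is that these distinct pairs are fed into \emph{distinct} change-of-base identities, \eqref{indefpair4} into Lemma~\ref{keylemma2} and \eqref{eq:BP1.4} into Lemma~\ref{keylemma3}, and the extra factor $\dfrac{1+q}{1+q^{2k+1}}\,q^{k}$ present in Lemma~\ref{keylemma3} relative to Lemma~\ref{keylemma2} is precisely this ratio. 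Once you verify this cancellation, the $\alpha$-sides of both computations become literally identical (before and after the $m-1$ iterations, which multiply each by the same $q^{(2m-2)(k^2+k)}$), and the common indefinite theta expression follows. The $(-q^{2n_1+2})_{2n_m-2n_1}$ versus $(-q^{2n_1+1})_{2n_m-2n_1}$ shift you highlight is not deposited by the change of base alone: it is the quotient of the factor $(-q^2)_{2n_m}$ (resp.\ $(-q)_{2n_m}$) from Lemma~\ref{keylemma2} (resp.\ Lemma~\ref{keylemma3}) by the $1/(-q^2)_{2n_1}$ (resp.\ $1/(-q)_{2n_1}$) already sitting in the innermost $\beta_{n_1}$ of the starting pair.
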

\noindent Note that when $m=1$ and $q=1/q$ Theorems \ref{theorem3} and \ref{theorem4} reduce to \eqref{Cohen} and \eqref{Cohenex2}.  

The rest of the paper is organized as follows.   In the next section we review the classical Bailey lemma along with some changes-of-base.   In Section 3 we prove the classical quantum $q$-series identities in \eqref{Cohen} -- \eqref{BOPR}.   In Section 4 we prove Theorems \ref{theorem1} -- \ref{theorem4}.   Future papers in this series will be devoted to a more thorough investigation of the role Bailey pairs play in proving quantum $q$-series identities as well as the role such identities play in establishing the quantum modularity of the relevant series.

\section{The Bailey lemma}     
We begin by reviewing some basic facts about Bailey pairs.   For more background, see \cite{An.5, An2,Wa1}.  A pair of sequences $(\alpha_n,\beta_n)$ is a Bailey pair relative to $a$ if
\begin{align} 
\beta_n &= \sum_{k=0}^n \frac{\alpha_k}{(q)_{n-k}(aq)_{n+k}} \label{pairdeforiginal} \\
& = \frac{1}{(q)_n(aq)_n} \sum_{k=0}^n \frac{(q^{-n})_k}{(aq^{n+1})_k}(-1)^kq^{nk -\binom{k}{2}} \alpha_k.   \label{pairdef}
\end{align}
Equation \eqref{pairdeforiginal} is the original definition, while \eqref{pairdef} follows using the relation
\begin{equation} \label{qPochn-k}
(x)_{n-k} = \frac{(x)_n}{(q^{1-n}/x)_k}(-q/x)^kq^{\binom{k}{2} - nk}.
\end{equation}
The following, known as the Bailey lemma and due to Andrews \cite{An.5}, produces new Bailey pairs from a given pair.   
\begin{lemma} \label{Baileylemma}
If $(\alpha_n,\beta_n)$ is a Bailey pair relative to $a$, then so is $(\alpha_n',\beta_n')$, where
\begin{equation} \label{alphaprime}
\alpha_n' = \frac{(b)_n(c)_n (aq/bc)^n}{(aq/b)_n(aq/c)_n} \alpha_n
\end{equation}
and
\begin{align} 
\beta_n' &=  \frac{1}{(aq/b,aq/c)_n} \sum_{k=0}^n \frac{(b)_k(c)_k(aq/bc)_{n-k} (aq/bc)^k}{(q)_{n-k}} \beta_k \label{betaprime} \\
&= \frac{(aq/bc)_n}{(q)_n(aq/b)_n(aq/c)_n} \sum_{k=0}^n \frac{(b)_k(c)_k(q^{-n})_kq^k}{(bcq^{-n}/a)_k}\beta_k \label{betaprimebis}.
\end{align}
\end{lemma}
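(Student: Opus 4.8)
The plan is to establish the two assertions of the lemma separately: first that the expressions \eqref{betaprime} and \eqref{betaprimebis} for $\beta_n'$ coincide, and second -- the substantive point -- that the pair $(\alpha_n',\beta_n')$ so defined actually satisfies the defining relation \eqref{pairdeforiginal} relative to $a$. The equality of the two forms of $\beta_n'$ is the routine part. Starting from \eqref{betaprime}, I would apply \eqref{qPochn-k} with $x=aq/bc$ and again with $x=q$ to rewrite the ratio $(aq/bc)_{n-k}/(q)_{n-k}$ in terms of $(aq/bc)_n/(q)_n$ together with the shifted factors $(q^{-n})_k$ and $(bcq^{-n}/a)_k$. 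The signs and the powers $q^{\binom{k}{2}-nk}$ cancel in the ratio, and the surviving monomial combines with the $(aq/bc)^k$ already present to give $q^k$, producing \eqref{betaprimebis} exactly.

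For the main claim I would verify \eqref{pairdeforiginal} for the primed pair directly. Begin with $\beta_n'$ in the form \eqref{betaprime} and substitute the hypothesis that $(\alpha_n,\beta_n)$ is a Bailey pair, i.e. $\beta_k=\sum_{j=0}^k \alpha_j/\big((q)_{k-j}(aq)_{k+j}\big)$ from \eqref{pairdeforiginal}. After interchanging the order of the two summations (the constraint becomes $0\le j\le k\le n$), the coefficient of $\alpha_j$ is an inner sum over $k$ from $j$ to $n$, and the goal is to show that this inner sum equals $\alpha_j'/\big(\alpha_j\,(q)_{n-j}(aq)_{n+j}\big)$ with $\alpha_j'$ as in \eqref{alphaprime}; that is precisely the statement that $\beta_n'=\sum_j \alpha_j'/\big((q)_{n-j}(aq)_{n+j}\big)$.

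The crux is the evaluation of that inner sum. Reindexing by $k=j+m$ and splitting each Pochhammer symbol via $(x)_{j+m}=(x)_j(xq^j)_m$, the factor $(aq)_{2j}$ and the $j$-dependent prefactors pull out, leaving a terminating series in $m$ of length $n-j$. Using \eqref{qPochn-k} once more to reverse $(aq/bc)_{n-j-m}/(q)_{n-j-m}$, the summand becomes proportional to $q^m$ times $(bq^j)_m(cq^j)_m(q^{-(n-j)})_m/\big((q)_m(aq^{2j+1})_m(bcq^{j-n}/a)_m\big)$, which is a terminating balanced (Saalschützian) ${}_3\phi_2$ evaluated at $q$. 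Applying the classical $q$-Saalschütz summation (see \cite{An.5,An2,Wa1}) collapses it to $(aq^{j+1}/b)_{n-j}(aq^{j+1}/c)_{n-j}/\big((aq^{2j+1})_{n-j}(aq/bc)_{n-j}\big)$, and after restoring the prefactors and rewriting $(aq)_{n+j}=(aq)_{2j}(aq^{2j+1})_{n-j}$, $(aq/b)_n/(aq/b)_j=(aq^{j+1}/b)_{n-j}$, and similarly for $c$, one recovers exactly the coefficient $\alpha_j'/\big(\alpha_j(q)_{n-j}(aq)_{n+j}\big)$. The main obstacle is entirely the bookkeeping: correctly tracking the sign and $q$-power contributions produced by the reversal \eqref{qPochn-k}, and confirming that the resulting lower parameters $aq^{2j+1}$ and $bcq^{j-n}/a$ satisfy the balance condition $bcq^{j-n}/a=(bq^j)(cq^j)q^{1-(n-j)}/(aq^{2j+1})$ required for $q$-Saalschütz, so that the nontrivial summation may be invoked.
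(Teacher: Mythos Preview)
Your argument is correct and is the standard one: after interchanging the sums, the inner series is a terminating balanced ${}_3\phi_2$ summed by the $q$-Pfaff--Saalsch\"utz formula, and the bookkeeping you outline goes through exactly as described. Note, however, that the paper does not actually supply a proof of this lemma; it simply attributes the result to Andrews \cite{An.5} and moves on. Your proof is essentially Andrews' original proof (cf.\ also \cite{An2,Wa1}), so there is no methodological difference to discuss---you have filled in what the paper leaves as a citation.
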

Repeated application of the Bailey lemma is called the Bailey chain.   We record the cases $b,c \to \infty$ and $b,c \to 0$ of \eqref{betaprime} with $a=q$.
\begin{lemma} \label{Baileylemmabcinfty}
If $(\alpha_n,\beta_n)$ is a Bailey pair relative to $q$, then so is $(\alpha_n', \beta_n')$, where
\begin{equation*}
\alpha_n'  = q^{n^2+n}\alpha_n
\end{equation*}
and
\begin{equation*}
\beta_n' = \sum_{k=0}^n \frac{q^{k^2+k}}{(q)_{n-k}}\beta_k.
\end{equation*}
\end{lemma}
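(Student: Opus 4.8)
The plan is to derive Lemma \ref{Baileylemmabcinfty} directly from Lemma \ref{Baileylemma} by specializing $a = q$ and letting $b, c \to \infty$. For every fixed pair $(b,c)$, Lemma \ref{Baileylemma} already certifies that $(\alpha_n',\beta_n')$ defined by \eqref{alphaprime} and \eqref{betaprime} is a Bailey pair relative to $q$; since the defining relation \eqref{pairdeforiginal} is a finite sum depending continuously on $b$ and $c$, the Bailey pair property is preserved in the limit, and it remains only to evaluate $\lim_{b,c \to \infty}\alpha_n'$ and $\lim_{b,c\to\infty}\beta_n'$ and match them against the claimed formulas.

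The one analytic input I would record first is the elementary asymptotic $(b)_k \sim (-b)^k q^{\binom{k}{2}}$ as $b \to \infty$, obtained by factoring $-bq^j$ out of each factor of $(b)_k = \prod_{j=0}^{k-1}(1 - bq^j)$, together with the observation that the Pochhammer symbols carrying $b$ or $c$ in the denominator (such as $(q^2/b)_n$, $(q^2/c)_n$, and $(q^2/bc)_{n-k}$) all tend to $1$. For $\alpha_n'$ I would substitute $a = q$ into \eqref{alphaprime} and apply these asymptotics: the denominator $(q^2/b)_n(q^2/c)_n$ tends to $1$, the powers of $b$ and $c$ in $(b)_n(c)_n$ cancel exactly against those in $(q^2/bc)^n$, and the surviving powers of $q$ combine via $2\binom{n}{2} + 2n = n^2 + n$ to give $\alpha_n' = q^{n^2+n}\alpha_n$. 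For $\beta_n'$ I would use the form \eqref{betaprime} with $a = q$, which is a finite sum in $k$, so the limit may be taken term by term; the prefactor $1/(q^2/b,q^2/c)_n \to 1$, and in the $k$th summand the same cancellation of the $b,c$ powers leaves $q^{2\binom{k}{2}+2k}/(q)_{n-k} = q^{k^2+k}/(q)_{n-k}$, yielding the asserted $\beta_n'$.

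The argument is essentially a bookkeeping exercise, so there is no genuine obstacle. The only points demanding care are confirming that all dependence on $b$ and $c$ cancels cleanly—so that no indeterminate forms appear and the term-by-term limit is legitimate—and tracking the exponents of $q$ accurately, using $2\binom{k}{2} = k^2 - k$.
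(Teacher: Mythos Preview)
Your proposal is correct and follows exactly the approach the paper indicates: the paper simply records Lemma~\ref{Baileylemmabcinfty} as the case $a=q$, $b,c\to\infty$ of Lemma~\ref{Baileylemma} without writing out the details, and your argument supplies precisely those details.
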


\begin{lemma} \label{Baileylemmabc0}
If $(\alpha_n,\beta_n)$ is a Bailey pair relative to $q$, then so is $(\alpha_n', \beta_n')$, where
\begin{equation*}
\alpha_n'  = q^{-n^2-n}\alpha_n
\end{equation*}
and
\begin{equation*}
\beta_n' = (-1)^nq^{-\binom{n+1}{2}-n}\sum_{k=0}^n \frac{q^{\binom{k+1}{2}-nk}(-1)^k}{(q)_{n-k}}\beta_k.
\end{equation*}
\end{lemma}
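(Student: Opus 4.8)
The plan is to obtain Lemma \ref{Baileylemmabc0} as the specialization $a = q$, $b, c \to 0$ of the Bailey lemma (Lemma \ref{Baileylemma}), exactly paralleling the way Lemma \ref{Baileylemmabcinfty} arises from $a = q$, $b, c \to \infty$. The only analytic input needed is the leading-order behaviour of a $q$-Pochhammer symbol whose argument blows up: as $b \to 0$,
\[
(q^2/b)_n = \prod_{k=0}^{n-1}\left(1 - \frac{q^{2+k}}{b}\right) \sim (-1)^n b^{-n} q^{2n + \binom{n}{2}},
\]
together with the elementary fact that $(b)_n \to 1$.

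First I would handle $\alpha_n'$. Setting $a = q$ in \eqref{alphaprime} gives $\alpha_n' = \frac{(b)_n(c)_n (q^2/bc)^n}{(q^2/b)_n(q^2/c)_n}\alpha_n$. As $b, c \to 0$ the factors $(b)_n$ and $(c)_n$ tend to $1$, while the displayed asymptotics for $(q^2/b)_n$ and $(q^2/c)_n$ show that all powers of $b$ and $c$ cancel against the $(q^2/bc)^n = q^{2n}(bc)^{-n}$ in the numerator. Collecting the surviving powers of $q$ yields $q^{2n - (4n + 2\binom{n}{2})} = q^{-n^2 - n}$, so $\alpha_n' = q^{-n^2-n}\alpha_n$ as claimed.

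Next I would treat $\beta_n'$ via \eqref{betaprime} with $a = q$, namely $\beta_n' = \frac{1}{(q^2/b,\,q^2/c)_n}\sum_{k=0}^n \frac{(b)_k(c)_k (q^2/bc)_{n-k}(q^2/bc)^k}{(q)_{n-k}}\beta_k$. The key point is that although the prefactor $1/[(q^2/b)_n(q^2/c)_n]$ vanishes and the factor $(q^2/bc)_{n-k}(q^2/bc)^k$ in each summand diverges, the two effects cancel: the prefactor contributes $(bc)^n$ up to a power of $q$, while $(q^2/bc)_{n-k} \sim (-1)^{n-k}(bc)^{-(n-k)}q^{2(n-k)+\binom{n-k}{2}}$ and $(q^2/bc)^k = q^{2k}(bc)^{-k}$ together contribute $(bc)^{-n}$, independently of the summation index $k$. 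Hence the powers of $bc$ drop out uniformly and the limit exists term by term; note $(q)_{n-k}$ involves neither $b$ nor $c$ and survives unchanged. After recording the leftover powers of $q$ and the sign $(-1)^{n-k} = (-1)^{n+k}$, a short exponent computation — both the limit and the target reduce to $\tfrac{1}{2}(-n^2 - 2nk + k^2 + k - 3n)$ — confirms that the limit equals the stated $\beta_n' = (-1)^n q^{-\binom{n+1}{2}-n}\sum_{k=0}^n \frac{q^{\binom{k+1}{2}-nk}(-1)^k}{(q)_{n-k}}\beta_k$.

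The computations are entirely routine, so there is no genuine obstacle. The only places demanding care are the bookkeeping of the $q$-exponents through the limit and the observation — essential for the limit to exist at all — that the net power of $bc$ in the $\beta_n'$ summand is $(bc)^{-n}$ for every $k$, matching the $(bc)^n$ produced by the prefactor.
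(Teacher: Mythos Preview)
Your proposal is correct and follows exactly the approach indicated in the paper: Lemma \ref{Baileylemmabc0} is simply the specialization $a=q$, $b,c\to 0$ of the Bailey lemma, and your asymptotic computations for $(q^2/b)_n$, $(q^2/bc)_{n-k}$, etc.\ carry this out accurately. The exponent bookkeeping you record matches the stated $\alpha_n'$ and $\beta_n'$, so there is nothing to add.
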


Using \eqref{alphaprime} and \eqref{betaprimebis} in \eqref{pairdef} with $n=n-1$ and $a=q$ gives a key identity, which we state as a lemma.
\begin{lemma} \label{keylemma1}
If $(\alpha_n,\beta_n)$ is a Bailey pair relative to $q$, then we have
\begin{equation*} \label{keyrational}
\frac{(q^2/bc)_{n-1}(q^2)_{n-1}}{(q^2/b)_{n-1}(q^2/c)_{n-1}}\sum_{k=0}^{n-1} \frac{(b)_k(c)_k(q^{1-n})_kq^k}{(bcq^{-n})_k}\beta_k  =
\sum_{k=0}^{n-1} \frac{(q^{1-n})_k(b)_k(c)_kq^{nk-\binom{k+1}{2}} (-q^2/bc)^k}{(q^{1+n})_k(q^2/b)_k(q^2/c)_k}\alpha_k.
\end{equation*}
\end{lemma}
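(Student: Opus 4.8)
The plan is to recognize the claimed identity as the statement that the two expressions for the transformed sequence $\beta_n'$ in the Bailey lemma are forced to coincide once the defining relation \eqref{pairdef} is imposed on the transformed pair. Concretely, I would begin with a Bailey pair $(\alpha_n,\beta_n)$ relative to $q$ and apply Lemma \ref{Baileylemma} with $a=q$ to produce a new Bailey pair $(\alpha_n',\beta_n')$, again relative to $q$, with $\alpha_n'$ given by \eqref{alphaprime} and $\beta_n'$ by \eqref{betaprimebis}. The essential observation is that, being a Bailey pair, $(\alpha_n',\beta_n')$ must itself satisfy the universal relation \eqref{pairdef}, so we may compute $\beta_n'$ in two different ways and equate them.

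First I would write \eqref{pairdef} for the pair $(\alpha_n',\beta_n')$ with $a=q$ and the index $n$ replaced by $n-1$, namely
\[
\beta_{n-1}' = \frac{1}{(q)_{n-1}(q^2)_{n-1}} \sum_{k=0}^{n-1} \frac{(q^{1-n})_k}{(q^{n+1})_k}(-1)^k q^{(n-1)k-\binom{k}{2}} \alpha_k',
\]
and then substitute $\alpha_k'$ from \eqref{alphaprime}. The factor $(q^2/bc)^k$ coming from $\alpha_k'$ combines with $(-1)^k q^{(n-1)k-\binom{k}{2}}$ to give exactly $q^{nk-\binom{k+1}{2}}(-q^2/bc)^k$, using $\binom{k+1}{2}=\binom{k}{2}+k$; this turns the right-hand side into $\tfrac{1}{(q)_{n-1}(q^2)_{n-1}}$ times the $\alpha$-sum on the right of the lemma. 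For the left-hand side I would instead use the alternative formula \eqref{betaprimebis} for $\beta_{n-1}'$, again with $a=q$ and $n$ replaced by $n-1$, which expresses $\beta_{n-1}'$ as $\tfrac{(q^2/bc)_{n-1}}{(q)_{n-1}(q^2/b)_{n-1}(q^2/c)_{n-1}}$ times the $\beta$-sum on the left of the lemma. Equating the two evaluations of $\beta_{n-1}'$ and clearing the common factor $(q)_{n-1}^{-1}$ together with the denominator $(q^2)_{n-1}^{-1}$ then yields precisely the prefactor $\tfrac{(q^2/bc)_{n-1}(q^2)_{n-1}}{(q^2/b)_{n-1}(q^2/c)_{n-1}}$ and the stated identity.

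Since each step is an application of a relation already established in the excerpt, there is no genuine obstacle; the computation is essentially bookkeeping. The one place to be attentive is the simultaneous specialization $a=q$ and $n\mapsto n-1$, which sends $(aq^{n+1})_k$ to $(q^{n+1})_k$ and $(bcq^{-n}/a)_k$ to $(bcq^{-n})_k$, together with the careful matching of the $q$-powers and signs after absorbing $(q^2/bc)^k$, where the shift from $\binom{k}{2}$ to $\binom{k+1}{2}$ must come out exactly.
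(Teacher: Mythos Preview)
Your proposal is correct and follows exactly the route the paper indicates: the paper's entire proof is the single sentence ``Using \eqref{alphaprime} and \eqref{betaprimebis} in \eqref{pairdef} with $n=n-1$ and $a=q$ gives a key identity,'' and you have simply unpacked that sentence with the appropriate bookkeeping on the $q$-powers and Pochhammer shifts.
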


Next, we record two changes-of-base for Bailey pairs due to Bressoud, Ismail, and Stanton \cite{Br-Is-St1}.   The first is the case $a=q$ of \cite[D(1)]{Br-Is-St1} while the second is the case $a=q$ of \cite[D(4)]{Br-Is-St1}.
\begin{lemma} \label{basechangelemma1}
If $(\alpha_n,\beta_n)$ is a Bailey pair relative to $q$, then so is $(\alpha_n', \beta_n')$, where 
\begin{equation*}
\alpha_n' = \alpha_n(q^2),
\end{equation*}
and
\begin{equation*}
\beta_n'= \sum_{k=0}^n \frac{(-q^2)_{2k}}{(q^2;q^2)_{n-k}}q^{n-k}\beta_k(q^2),
\end{equation*}
\end{lemma}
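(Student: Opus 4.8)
The plan is to obtain Lemma~\ref{basechangelemma1} as the specialization $a=q$ of the change of base \cite[D(1)]{Br-Is-St1}; once the parameters are matched this is essentially immediate, so the only thing to check on that route is that the normalization of \cite{Br-Is-St1} really produces the pair $(\alpha_n',\beta_n')$ as written. For a self-contained argument that does not quote \cite{Br-Is-St1}, I would instead verify the Bailey pair relation \eqref{pairdeforiginal} directly, and the steps below describe how.

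First I note that replacing $q$ by $q^2$ throughout the hypothesis turns $(\alpha_n,\beta_n)$ into a Bailey pair relative to $q^2$ in base $q^2$; that is,
\[
\beta_n(q^2)=\sum_{j=0}^n\frac{\alpha_j(q^2)}{(q^2;q^2)_{n-j}\,(q^4;q^2)_{n+j}}.
\]
Since $\alpha_n'=\alpha_n(q^2)$, proving that $(\alpha_n',\beta_n')$ is a Bailey pair relative to $q$ (in base $q$) amounts, by \eqref{pairdeforiginal}, to checking that
\[
\beta_n'=\sum_{j=0}^n\frac{\alpha_j(q^2)}{(q)_{n-j}\,(q^2)_{n+j}}.
\]

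Next I would substitute the displayed expression for $\beta_k(q^2)$ into the definition of $\beta_n'$ and interchange the order of summation. Collecting the coefficient of each $\alpha_j(q^2)$, the claim collapses to the single ``inner'' summation
\[
\sum_{k=j}^{n}\frac{(-q^2)_{2k}\,q^{\,n-k}}{(q^2;q^2)_{n-k}\,(q^2;q^2)_{k-j}\,(q^4;q^2)_{k+j}}=\frac{1}{(q)_{n-j}\,(q^2)_{n+j}},
\]
valid for all $0\le j\le n$. This is now a purely formal statement with no reference to the Bailey pair, and I would prove it by setting $k=j+i$, splitting the base-$q$ factor as $(-q^2)_{2j+2i}=(-q^2)_{2j}\,(-q^{2j+2};q^2)_i\,(-q^{2j+3};q^2)_i$, and converting $1/(q^2;q^2)_{n-j-i}$ into a terminating factor $(q^{-2(n-j)};q^2)_i$ by means of \eqref{qPochn-k} with $q$ replaced by $q^2$. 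The sum then becomes a terminating basic hypergeometric series in base $q^2$, which I would evaluate either by a terminating $q$-summation theorem or, more robustly, by induction on $n$ using the $q$-Pascal recurrence.

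The hard part will be this last step. Because the summand mixes the two bases $q$ and $q^2$, the delicate point is to split the base-$q$ Pochhammer cleanly into base-$q^2$ pieces, track the acccompanying powers of $q$ and signs correctly, and line the parameters up with a summation formula in the right base; the $i$-independent prefactors $(-q^2)_{2j}$, $(q^4;q^2)_{2j}$, $(q^2;q^2)_{n-j}$ and $q^{\,n-j}$ must be collected with equal care so that they reproduce exactly $1/\bigl[(q)_{n-j}(q^2)_{n+j}\bigr]$ on the right. The obstruction-free alternative, of course, is simply to cite \cite[D(1)]{Br-Is-St1}, where precisely this computation has already been carried out for general $a$.
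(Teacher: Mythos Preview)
Your proposal is correct and matches the paper exactly: the paper simply records this lemma as the case $a=q$ of \cite[D(1)]{Br-Is-St1} with no further argument. Your additional self-contained verification via substituting the base-$q^2$ Bailey relation, interchanging sums, and reducing to a terminating summation is a reasonable elaboration that goes beyond what the paper provides.
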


\begin{lemma} \label{basechangelemma2}
If $(\alpha_n,\beta_n)$ is a Bailey pair relative to $q$, then so is $(\alpha_n', \beta_n')$, where 
\begin{equation*}
\alpha_n' = \frac{1+q}{1+q^{2n+1}}q^n\alpha_n(q^2),
\end{equation*}
and
\begin{equation*}
\beta_n' = \sum_{k=0}^n \frac{(-q)_{2k}}{(q^2;q^2)_{n-k}}q^k\beta_k(q^2).
\end{equation*}
\end{lemma}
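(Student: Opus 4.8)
The plan is to verify the defining relation \eqref{pairdeforiginal} (with $a=q$) for the candidate pair $(\alpha_n',\beta_n')$ directly. The starting observation is that replacing $q$ by $q^2$ in \eqref{pairdeforiginal} shows that $(\alpha_n(q^2),\beta_n(q^2))$ is a Bailey pair relative to $q^2$, so that $\beta_j(q^2)=\sum_{i=0}^{j}\alpha_i(q^2)/\bigl((q^2;q^2)_{j-i}(q^4;q^2)_{j+i}\bigr)$. Substituting this into the given expression for $\beta_n'$ and interchanging the order of summation rewrites $\beta_n'$ as a single linear combination $\sum_{i=0}^{n}c_{n,i}\,\alpha_i(q^2)$ with explicit, terminating coefficients $c_{n,i}$.

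Because the inversion \eqref{pairdef} recovers $\alpha$ from $\beta$, distinct sequences $\alpha$ yield distinct Bailey pairs; hence the $\alpha_i(q^2)$ may be treated as free parameters, and the assertion that $(\alpha_n',\beta_n')$ is a Bailey pair relative to $q$ is equivalent to matching the coefficient of each $\alpha_i(q^2)$ on the two sides of \eqref{pairdeforiginal}. This reduces the lemma to the single terminating identity
\[
\sum_{j=i}^{n}\frac{(-q)_{2j}\,q^{j}}{(q^2;q^2)_{n-j}\,(q^2;q^2)_{j-i}\,(q^4;q^2)_{j+i}}
=\frac{(1+q)\,q^{i}}{(1+q^{2i+1})\,(q)_{n-i}\,(q^2;q)_{n+i}},\qquad 0\le i\le n,
\]
where the right-hand side carries the prescribed factor $(1+q)q^{i}/(1+q^{2i+1})$ coming from $\alpha_i'$.

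To establish this identity I would first pass from base $q^2$ to base $q$ using $(q^2;q^2)_m=(q)_m(-q)_m$ and $(q^4;q^2)_{j+i}=(q^2;q^2)_{j+i+1}/(1-q^2)$, together with $(q^2;q)_{n+i}=(q)_{n+i+1}/(1-q)$ on the right. The powers of $1-q^2$ then cancel, and after the shift $j=i+s$ and multiplication by $(q^2;q^2)_{n-i}$ the left side becomes a terminating base-$q^2$ series of $q$-binomial type. I would then sum it by a standard terminating summation (the $q$-Chu--Vandermonde or $q$-Pfaff--Saalsch\"utz theorem), or, failing a clean match of parameters, by a short induction on $N=n-i$: the base case $n=i$ is immediate, and the inductive step follows from the recurrence of the summand in $n$. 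The closed form produces exactly the asymmetric factor $1/(1+q^{2i+1})$.

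The main obstacle is not the structural reduction but the bookkeeping in this summation: keeping the two bases $q$ and $q^2$ straight through the Pochhammer conversions, and confirming that the factor $(1+q)/(1+q^{2i+1})$, rather than a symmetric analogue, is what emerges. As a check, since this lemma is by hypothesis the $a=q$ case of \cite[D(4)]{Br-Is-St1}, I would specialize that general-$a$ statement and confirm it matches the closed form above, guarding against an error in the specialization.
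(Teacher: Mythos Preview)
The paper does not actually prove this lemma: it is simply quoted as the case $a=q$ of \cite[D(4)]{Br-Is-St1}, with no argument given. So there is no ``paper's proof'' to compare against beyond the citation.

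Your reduction is correct. Substituting the Bailey-pair relation for $(\alpha(q^2),\beta(q^2))$ into the formula for $\beta_n'$ and swapping sums does reduce the claim to the coefficient identity you display, and the linearity/invertibility argument justifying ``treat $\alpha_i(q^2)$ as free parameters'' is fine. Where your proposal is thin is exactly where you flag it: you do not actually carry out the summation. Saying you would apply $q$-Chu--Vandermonde or $q$-Pfaff--Saalsch\"utz, ``or, failing a clean match of parameters,'' fall back on induction, is a plan rather than a proof. In practice the sum in question (after your shift $j=i+s$) is a terminating ${}_2\phi_1$ in base $q^2$ that can indeed be handled by $q$-Chu--Vandermonde once the factors $(-q)_{2i+2s}$ and $(q^4;q^2)_{2i+s}$ are rewritten appropriately; alternatively the induction on $n-i$ goes through cleanly. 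Either way, to make this a proof you should pick one route and execute it, confirming that the closed form really produces the asymmetric $(1+q)/(1+q^{2i+1})$. Your final ``check'' of specializing the general-$a$ statement from \cite{Br-Is-St1} is in fact exactly what the paper does in lieu of a proof.
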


Using these in \eqref{pairdef} and rewriting using \eqref{qPochn-k} gives two more key identities.   
\begin{lemma} \label{keylemma2}
If $(\alpha_n,\beta_n)$ is a Bailey pair relative to $q$, then 
\begin{equation*}
\begin{aligned}
\sum_{k=0}^{n-1}(-q^2)_{2k}&(q^{2-2n};q^2)_k(-1)^kq^{n(2k+1)-k^2-2k-1} \beta_k(q^2) \\
&= \frac{(-q)_{n-1}}{(q^2)_{n-1}}\sum_{k=0}^{n-1} \frac{(q^{1-n})_k}{(q^{1+n})_k}(-1)^kq^{nk-\binom{k+1}{2}} \alpha_k(q^2).
\end{aligned}
\end{equation*}
\end{lemma}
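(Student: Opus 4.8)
The plan is to mimic the derivation of Lemma \ref{keylemma1}, but feeding the change-of-base Lemma \ref{basechangelemma1} into the definition \eqref{pairdef} in place of the ordinary Bailey lemma. Given a Bailey pair $(\alpha_n,\beta_n)$ relative to $q$, Lemma \ref{basechangelemma1} produces a new pair $(\alpha_n',\beta_n')$ relative to $q$ with $\alpha_n'=\alpha_n(q^2)$ and $\beta_n'=\sum_{k=0}^n \frac{(-q^2)_{2k}}{(q^2;q^2)_{n-k}}q^{n-k}\beta_k(q^2)$. I would insert this pair into \eqref{pairdef} with $a=q$ and with $n$ replaced by $n-1$; since $(aq)_{n-1}=(q^2)_{n-1}$, $q^{-(n-1)}=q^{1-n}$ and $aq^{(n-1)+1}=q^{n+1}$, this records a single relation whose two sides will become the two sides of the claimed identity after simplification.

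For the left-hand side I would substitute the explicit formula for $\beta_{n-1}'$ and then rewrite the reciprocal Pochhammer $1/(q^2;q^2)_{(n-1)-k}$ by applying \eqref{qPochn-k} with base $q^2$ (i.e. $q\mapsto q^2$, $x=q^2$, $n\mapsto n-1$). This is the step that manufactures the factor $(q^{2-2n};q^2)_k$ and a quadratic power of $q$; collecting this power together with the $q^{n-1-k}$ already present yields the exponent $n(2k+1)-k^2-2k-1$, and it leaves behind an overall constant $1/(q^2;q^2)_{n-1}$. For the right-hand side the substitution is immediate, $\alpha_k'=\alpha_k(q^2)$, and the only bookkeeping is to merge the powers $q^{(n-1)k-\binom{k}{2}}$ into $q^{nk-\binom{k+1}{2}}$ using $\binom{k+1}{2}=\binom{k}{2}+k$.

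Finally I would clear the common denominator by multiplying both sides by $(q^2;q^2)_{n-1}$. On the left this cancels the leftover constant and produces exactly the stated sum, while on the right the prefactor collapses via the factorization $(q^2;q^2)_{n-1}=(q)_{n-1}(-q)_{n-1}$, turning $\frac{(q^2;q^2)_{n-1}}{(q)_{n-1}(q^2)_{n-1}}$ into $\frac{(-q)_{n-1}}{(q^2)_{n-1}}$, which is the claimed coefficient. The main obstacle is purely one of bookkeeping: one must keep careful track of which Pochhammer symbols are taken to base $q$ and which to base $q^2$ — in particular distinguishing the base-$q$ symbol $(q^2)_{n-1}$ coming from \eqref{pairdef} from the base-$q^2$ symbol $(q^2;q^2)_{n-1}$ produced by the change of base — since it is precisely the ratio of these that yields the factor $(-q)_{n-1}$.
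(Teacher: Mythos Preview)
Your proposal is correct and is exactly the route the paper takes: it says only ``Using these in \eqref{pairdef} and rewriting using \eqref{qPochn-k} gives two more key identities,'' and your write-up spells out precisely that computation, including the crucial distinction between the base-$q$ symbol $(q^2)_{n-1}$ and the base-$q^2$ symbol $(q^2;q^2)_{n-1}$ that produces the factor $(-q)_{n-1}$.
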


\begin{lemma} \label{keylemma3}
If $(\alpha_n,\beta_n)$ is a Bailey pair relative to $q$, then 
\begin{equation*}
\begin{aligned}
\sum_{k=0}^{n-1}(-q)_{2k}&(q^{2-2n};q^2)_k(-1)^kq^{2nk-k^2} \beta_k(q^2) \\
&= \frac{(-q)_{n-1}}{(q^2)_{n-1}}\sum_{k=0}^{n-1} \frac{(q^{1-n})_k(1+q)}{(q^{1+n})_k(1+q^{2k+1})}(-1)^kq^{nk-\binom{k}{2}} \alpha_k(q^2).
\end{aligned}
\end{equation*}
\end{lemma}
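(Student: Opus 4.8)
The plan is to follow verbatim the derivation that produced Lemma~\ref{keylemma2}, replacing the change-of-base of Lemma~\ref{basechangelemma1} by that of Lemma~\ref{basechangelemma2}. That is, I take the Bailey pair $(\alpha_n',\beta_n')$ relative to $q$ furnished by Lemma~\ref{basechangelemma2}, insert it into the defining relation \eqref{pairdef} with $a=q$ and the outer index shifted to $n-1$, and then reconcile the two resulting expressions for $\beta_{n-1}'$.

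Specializing \eqref{pairdef} to $a=q$ and $n \mapsto n-1$ gives
\[
\beta_{n-1}' = \frac{1}{(q)_{n-1}(q^2)_{n-1}} \sum_{k=0}^{n-1} \frac{(q^{1-n})_k}{(q^{n+1})_k}(-1)^k q^{(n-1)k - \binom{k}{2}} \alpha_k'.
\]
Substituting $\alpha_k' = \frac{1+q}{1+q^{2k+1}}q^k \alpha_k(q^2)$ and using $q^{(n-1)k}\cdot q^k = q^{nk}$ turns the right-hand side into the $\alpha$-sum of the claim, up to the prefactor $1/\bigl((q)_{n-1}(q^2)_{n-1}\bigr)$. (Note that this extra $q^k$ from $\alpha_k'$ is exactly what changes the $\binom{k+1}{2}$ of Lemma~\ref{keylemma2} into the $\binom{k}{2}$ appearing here.)

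On the other hand, the explicit formula for $\beta_n'$ in Lemma~\ref{basechangelemma2}, again with $n \mapsto n-1$, reads $\beta_{n-1}' = \sum_{k=0}^{n-1}\frac{(-q)_{2k}}{(q^2;q^2)_{n-1-k}}q^k \beta_k(q^2)$. The one genuinely computational step is to rewrite the descending factor $1/(q^2;q^2)_{n-1-k}$ by applying \eqref{qPochn-k} in base $q^2$, i.e.\ with $q \mapsto q^2$, $x=q^2$, and $n \mapsto n-1$; this gives $(q^2;q^2)_{n-1-k} = \frac{(q^2;q^2)_{n-1}}{(q^{2-2n};q^2)_k}(-1)^k q^{k^2+k-2nk}$. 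After this substitution the power of $q$ collapses to $q^{2nk-k^2}$, producing exactly the $\beta$-sum of the claim divided by $(q^2;q^2)_{n-1}$.

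Finally, I would equate the two expressions for $\beta_{n-1}'$ and clear the common factor $(q^2;q^2)_{n-1}$; it then remains only to simplify the surviving constant on the $\alpha$-side. Using $(q^2;q^2)_{n-1} = (q)_{n-1}(-q)_{n-1}$ one gets $(q^2;q^2)_{n-1}/\bigl((q)_{n-1}(q^2)_{n-1}\bigr) = (-q)_{n-1}/(q^2)_{n-1}$, which is precisely the prefactor in the statement. The main, and essentially only, obstacle is carrying out the base-$q^2$ instance of \eqref{qPochn-k} without dropping a sign or a power of $q$; once that rewrite is correct, the remaining manipulations are the same bookkeeping as in the proof of Lemma~\ref{keylemma2}.
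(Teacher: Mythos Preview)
Your argument is correct and is exactly the approach the paper takes: the paper states only that Lemmas~\ref{keylemma2} and~\ref{keylemma3} follow by inserting the change-of-base Bailey pairs of Lemmas~\ref{basechangelemma1} and~\ref{basechangelemma2} into \eqref{pairdef} and rewriting via \eqref{qPochn-k}, and you have carried out precisely that computation for the Lemma~\ref{basechangelemma2} case, including the base-$q^2$ instance of \eqref{qPochn-k} and the simplification $(q^2;q^2)_{n-1}/\bigl((q)_{n-1}(q^2)_{n-1}\bigr)=(-q)_{n-1}/(q^2)_{n-1}$.
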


Finally, we note five Bailey pairs. 

\begin{lemma} \label{fivepairslemma}
The following are Bailey pairs relative to $q$.
\begin{align}
\alpha_k &= \frac{(1-q^{2k+1})q^{-k}}{1-q}\sum_{j=-k}^k(-1)^jq^{j(3j+1)/2} \ \ \ \text{and} \ \ \  \beta_k = \frac{q^{-k}}{(q)_k}, \label{posdefpair2} \\
\alpha_k &= \frac{(1-q^{2k+1})}{1-q}q^{2k^2+k}\sum_{j=-k}^k (-1)^jq^{-j(3j+1)/2} \ \ \ \text{and } \ \ \ \beta_k = 1, \label{indefpair1} \\
\alpha_k &= \frac{(1-q^{2k+1})}{1-q}q^{k(3k+1)/2}\sum_{j = -k}^k (-1)^jq^{-j^2} \ \ \ \text{and} \ \ \ \beta_k = \frac{1}{(-q)_k}, \label{indefpair2} \\
\alpha_k &= \frac{(1-q^{k+1/2})}{1-q^{1/2}}q^{k^2+k/2}\sum_{j=-k}^k (-1)^jq^{-j^2/2} \ \ \ \text{and} \ \ \ \beta_k = \frac{1}{(-q;q^{1/2})_{2k}}, \label{indefpair4}\\
\alpha_k &= \frac{(1-q^{2k+1})}{(1-q)} q^{k^2} \sum_{j=-k}^k (-1)^j q^{-j^2/2} \ \ \ \text{and} \ \ \ 
\beta_k = \frac{(q^{1/2};q)_k}{(q)_k(-q^{1/2};q^{1/2})_{2k}}.  \label{eq:BP1.4}
\end{align}
\end{lemma}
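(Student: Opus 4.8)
The plan is to verify each of the five pairs directly against the definition, using the form \eqref{pairdef} of the Bailey pair relation with $a=q$. Setting $a=q$ there, checking that a given $(\alpha_k,\beta_k)$ is a Bailey pair relative to $q$ is equivalent to the finite identity
\begin{equation*}
(q)_n(q^2)_n\,\beta_n = \sum_{k=0}^n \frac{(q^{-n})_k}{(q^{n+2})_k}(-1)^kq^{nk-\binom{k}{2}}\alpha_k .
\end{equation*}
The decisive structural observation is that every $\alpha_k$ in the lemma carries the very-well-poised prefactor $\frac{1-q^{2k+1}}{1-q}=\frac{1-aq^{2k}}{1-a}$ with $a=q$ (or $\frac{1-q^{k+1/2}}{1-q^{1/2}}$, the same factor taken in base $q^{1/2}$, for \eqref{indefpair4} and \eqref{eq:BP1.4}), multiplied by a power of $q$ and a symmetric, truncated theta sum $\sum_{j=-k}^k(\pm1)^jq^{Q(j)}$.

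First I would substitute the inner sum $\sum_{j=-k}^k(\pm1)^jq^{Q(j)}$ for the theta factor in $\alpha_k$ and interchange the order of summation, moving the $j$-sum outside and leaving, for each fixed $j$, a sum over $k$ from $|j|$ to $n$. The factors $\frac{1-q^{2k+1}}{1-q}$, $(q^{-n})_k$, and $(q^{n+2})_k$ combine to give precisely the signature of a terminating very-well-poised basic hypergeometric series, while the quadratic term $q^{Q(j)}$ only shifts its numerator parameters by powers of $q$. Each such inner sum can then be evaluated in closed form by a terminating very-well-poised summation (Jackson's ${}_6\phi_5$ summation, i.e. the $q$-analogue of Dougall's theorem), or, in the degenerate cases where a parameter tends to $0$ or $\infty$, by the $q$-Gauss or $q$-Chu--Vandermonde summation.

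Once each inner sum has collapsed, what remains is an outer sum over $j$ which I expect to reassemble to $(q)_n(q^2)_n\,\beta_n$, either as a single surviving term or by a short telescoping in $j$; the case $n=0$, where $\alpha_0=\beta_0$, anchors the bookkeeping. The pairs \eqref{indefpair4} and \eqref{eq:BP1.4} would be handled by the identical template, with the very-well-poised structure taken in base $q^{1/2}$ and the mixed Pochhammers $(-q;q^{1/2})_{2k}$ and $(q^{1/2};q)_k$ tracked through the interchange. The main obstacle will be the parameter matching in the second step: one must arrange the shifts coming from $q^{Q(j)}$ and the sign $(\pm1)^j$ so that the inner $k$-sum is genuinely a summable very-well-poised series and not merely well-poised, and then carry the resulting closed forms through the $j$-summation without sign or exponent errors. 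This is most delicate for the two half-integer pairs \eqref{indefpair4} and \eqref{eq:BP1.4}, where the quadratic exponents $-j^2/2$ and $k^2+k/2$ force careful tracking of half-integral powers of $q$. An equivalent, and perhaps cleaner, bookkeeping device is to invert the relation, expressing $\alpha_n$ in terms of $(\beta_k)_{k\le n}$ and recognizing the truncated theta sums via the Jacobi triple product; either way the crux is the same terminating very-well-poised evaluation.
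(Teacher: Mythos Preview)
The paper does not verify these pairs directly at all: its proof is a proof by citation. It records that \eqref{indefpair1} and \eqref{indefpair2} are due to Andrews \cite{An1}, that \eqref{indefpair4} is from Andrews--Hickerson \cite{An-Hi1}, and that \eqref{posdefpair2} and \eqref{eq:BP1.4} are the specializations $b,c,d\to\infty$ and $b=-1$, $c=-q^{1/2}$, $d=0$, respectively, of the parametric Bailey pair in \cite[Theorem~7]{Lo1}. So you are proposing a full direct computation where the paper simply appeals to the literature.

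Your plan is a legitimate program, but it is optimistic at the step you leave vaguest. After the interchange the inner $k$-sum runs from $|j|$ to $n$; shifting $k\mapsto k+|j|$ one does land in a \emph{limiting} Jackson ${}_6\phi_5$ (the quadratic exponents $q^{2k^2+k}$, $q^{k(3k+1)/2}$, $q^{k^2}$ force two or more numerator parameters to $0$ or $\infty$), and these limits do evaluate. The part you do not address is the outer sum: once the $k$-sum is collapsed you are left with an identity of the shape $\sum_{j=-n}^{n}(\pm1)^j q^{Q(j)}F(|j|,n)=(q)_n(q^2)_n\beta_n$, and for these particular $\beta$'s this is not ``a single surviving term or a short telescope'' but a genuine finite Rogers/Hecke-type identity---exactly the content Andrews establishes in \cite{An1} for \eqref{indefpair1}--\eqref{indefpair2} and that the parametric pair of \cite{Lo1} packages for \eqref{posdefpair2} and \eqref{eq:BP1.4}. (A small side point: \eqref{eq:BP1.4} does carry the ordinary prefactor $(1-q^{2k+1})/(1-q)$, not a base-$q^{1/2}$ variant; only \eqref{indefpair4} has the half-integer factor, and there the Bailey relation is still in base $q$, so the ``same template in base $q^{1/2}$'' needs more care than you indicate.) The inversion device you mention at the end---compute $\alpha_n$ from the $\beta_k$ via the Bailey inverse and recognize the truncated theta sum---is in fact how these pairs were originally obtained and is the cleaner direction; the forward verification you lead with is the harder one.
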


\begin{proof}
The Bailey pairs \eqref{indefpair1} and \eqref{indefpair2} are due to Andrews \cite{An1}, while the pair \eqref{indefpair4} arose in work of Andrews and Hickerson \cite{An-Hi1}.     The remaining pairs are easily deduced from \cite[Theorem 7]{Lo1}.   Namely, \eqref{posdefpair2} is the case $b,c,d \to \infty$ and \eqref{eq:BP1.4} is the case $b=-1$, $c=-q^{1/2}$, and $d=0$.
\end{proof}

\section{Proofs of \eqref{Cohen} -- \eqref{BOPR}}
In this section we prove the quantum $q$-series identities in \eqref{Cohen} -- \eqref{BOPR}.    In each case we use Bailey pairs to express both sides of the identity as truncated indefinite theta series.   We begin with \eqref{BOPR}, which will follow immediately from the next proposition.    We use the standard convention that for any $f$,
\begin{equation*} \label{convention}
\sum_{j=a}^b f(j) = - \sum_{j=b+1}^{a-1} f(j)
\end{equation*}
whenever $a > b$.  
\begin{proposition} \label{BOPRprop}
For $q$ a primitive $n$th root of unity we have
\begin{align}
\sum_{k=0}^{n-1} (q)_k &= \frac{-1}{n} \sum_{k=-n}^{n-1} \sum_{j=-k}^k (k^2-j(3j+1)/2) q^{-k^2 + j(3j+1)/2}(-1)^j, \label{BOPRpropeq1}\\
\sum_{k=0}^{n-1} (q)_k^2q^{k+1} &= \frac{-1}{n} \sum_{k=-n}^{n-1} \sum_{j=-k}^k (k^2-j(3j+1)/2)q^{k^2-j(3j+1)/2)}(-1)^j.  \label{BOPRpropeq2}
\end{align}
\end{proposition}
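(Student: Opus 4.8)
The plan is to prove each identity by selecting a Bailey pair whose $\alpha$-sequence already carries the indefinite theta series appearing on the right, and then to account for the truncation forced by the root of unity. For \eqref{BOPRpropeq1} the natural choice is \eqref{posdefpair2}, whose $\alpha_k$ contains $\sum_{j=-k}^k(-1)^jq^{j(3j+1)/2}$, exactly the inner sum on the right; for \eqref{BOPRpropeq2} I would use \eqref{indefpair1}, whose $\alpha_k$ contains $\sum_{j=-k}^k(-1)^jq^{-j(3j+1)/2}$. Since the two right-hand sides are interchanged by $q\mapsto q^{-1}$, which fixes both $(-1)^j$ and the factor $k^2-j(3j+1)/2$, proving both identities and comparing them under this symmetry will also deliver \eqref{BOPR}: applying $q\mapsto q^{-1}$ to \eqref{BOPRpropeq1} sends $\sum_k(q)_k$ to $\sum_k(q^{-1};q^{-1})_k$ while producing the right-hand side of \eqref{BOPRpropeq2}.

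First I would rewrite the left side of \eqref{BOPRpropeq1} using the reflection available at a primitive $n$th root of unity. From \eqref{qPochn-k} with $x=q$ (and $n$ replaced by $n-1$), together with the standard evaluation $(q)_{n-1}=n$, one obtains $(q)_{n-1-k}=n(-1)^kq^{\binom{k+1}{2}}/(q)_k$. Reindexing $k\mapsto n-1-k$ then gives
\[
\sum_{k=0}^{n-1}(q)_k = n\sum_{k=0}^{n-1}\frac{(-1)^kq^{\binom{k+1}{2}}}{(q)_k} = n\sum_{k=0}^{n-1}(-1)^kq^{\binom{k+1}{2}+k}\beta_k,
\]
where $\beta_k=q^{-k}/(q)_k$ is precisely the $\beta$-sequence of \eqref{posdefpair2}. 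This recasts the target as a weighted sum of the $\beta_k$, exactly the kind of object a Bailey-pair relation turns into a weighted sum of the $\alpha_k$.

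Next I would feed this $\beta$-sum into a Bailey-pair relation — a suitable specialization of the parameters $b,c$ in Lemma \ref{keylemma1}, or directly the definition \eqref{pairdef} with its order of summation interchanged — so as to replace $\sum_k(\cdot)\beta_k$ by $\sum_k(\cdot)\alpha_k$. Because $\alpha_k$ is the indefinite theta series of \eqref{posdefpair2}, the outcome is a double sum $\sum_k\sum_{j=-k}^k(-1)^jq^{(\cdots)}$ of the shape on the right of \eqref{BOPRpropeq1}, but \emph{without} the polynomial factor or the constant $-1/n$. The appearance of those ingredients is the crux. At $q=\xi$ a prefactor coming from the Bailey relation degenerates — a factor such as $(q^2)_{n-1}$, equivalently $1-q^n$, vanishes — while the accompanying theta series vanishes as well, so the relation only asserts $0=0$. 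I would therefore apply L'H\^opital's rule, differentiating in $q$ and evaluating at $\xi$: differentiating the simple zero produces the constant $-1/n$ via $(q)_{n-1}|_{q=\xi}=n$, while $q\frac{d}{dq}q^{-k^2+j(3j+1)/2}$ brings down the exponent and yields the factor $k^2-j(3j+1)/2$ up to the sign absorbed into $-1/n$. A final symmetrization, extending the range to $-n\le k\le n-1$ under the reversed-sum convention and the $k\mapsto-k$ symmetry of the summand, matches the stated right-hand side.

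For \eqref{BOPRpropeq2} I would run the identical scheme starting from \eqref{indefpair1}, where now $\beta_k=1$, so that the Bailey sum produced by Lemma \ref{keylemma1} becomes $\sum_k(q)_k^2q^{k+1}$ after the reflection and the appropriate parameter choice, again finishing with the L'H\^opital step. I expect the main obstacle to be precisely this root-of-unity limit: isolating the single factor that contributes the simple zero, differentiating it cleanly to extract $n$, controlling the derivatives of the remaining $q^{1\pm n}$-type factors that only collapse at $\xi$, and reconciling the resulting finite double sum — together with its reversed-range convention — with the right-hand sides as written.
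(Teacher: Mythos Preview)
Your plan selects the same two Bailey pairs as the paper and correctly anticipates that L'H\^opital's rule supplies both the constant $-1/n$ and the polynomial weight, so the overall strategy is the paper's.  But the reflection step you insert for \eqref{BOPRpropeq1} is a detour that, as written, does not close.  The weighted $\beta$-sum $\sum_k(-1)^kq^{\binom{k+1}{2}+k}\beta_k$ you reach after reflecting is not the $\beta$-side of Lemma~\ref{keylemma1} under any of the standard limits in $b,c$: every such specialisation still carries the factor $(q^{1-n})_k$, which at a primitive $n$th root contributes a stray $(q)_k$ in the weight that your target sum does not have.  Moreover, your reflection has already introduced a factor of $n$, so once L'H\^opital on a simple zero of type $1-q^n$ contributes $-1/n$, the net constant would be $-1$ rather than the $-1/n$ in the proposition; the accounting ``$-1/n$ via $(q)_{n-1}|_{q=\xi}=n$'' is not what actually happens.

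The paper bypasses the reflection entirely by specialising Lemma~\ref{keylemma1} with $b\to 0$, $c=q$, giving \eqref{b0cq}.  The observation you are missing is that at a primitive $n$th root of unity one has $(q^{1-n})_k=(q)_k$ and $(q^{1-n})_k/(q^{1+n})_k=1$.  With $\beta_k=q^{-k}/(q)_k$ the left side of \eqref{b0cq} therefore collapses \emph{directly} to $\tfrac{q(1-q^n)}{1-q}\sum_k(q^{1-n})_k$, and with $\beta_k=1$ it collapses to $\tfrac{1-q^n}{1-q}\sum_k(q)_k(q^{1-n})_kq^{k+1}$; both target sums appear immediately, each multiplied by the single simple zero $1-q^n$.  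One application of L'H\^opital then yields exactly one factor $-1/n$, and $q\,\tfrac{d}{dq}$ on the $\alpha$-side brings down the exponent $-k^2+j(3j+1)/2$ (resp.\ $k^2-j(3j+1)/2$).  Your proposed final symmetrisation to the range $-n\le k\le n-1$ using the factor $1-q^{2k+1}$ is exactly how the paper writes the answer.
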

 
\begin{proof}
We begin with \eqref{BOPRpropeq1}.   Letting $b \to 0$ and setting $c=q$ in Lemma \ref{keylemma1} we have that if $(\alpha_k,\beta_k)$ is a Bailey pair relative to $q$, then  
\begin{equation} \label{b0cq}
\frac{1-q^n}{1-q}\sum_{k =0}^{n-1} (q)_k(q^{1-n})_kq^{k+1} \beta_k = q^n\sum_{k=0}^{n-1} \frac{(q^{1-n})_k}{(q^{1+n})_k}q^{nk - (k^2+k)}\alpha_k.
\end{equation}
Using the Bailey pair \eqref{posdefpair2} in \eqref{b0cq} we have the rational function identity
\begin{equation*}
(1-q^n)\sum_{k =0}^{n-1} (q^{1-n})_k = q^n \sum_{k=0}^{n-1} \frac{(q^{1-n})_k}{(q^{1+n})_k}q^{nk-k^2-2k-1}(1-q^{2k+1})\sum_{j=-k}^k(-1)^jq^{j(3j+1)/2}.
\end{equation*}
In light of the factor $(1-q^n)$ on the left-hand side, we see that the right-hand side vanishes when $q$ is an $n$th root of unity.     Using l'H\^opital's rule we obtain that if $q = \zeta_n$ is a primitive $n$th root of unity, then  
\begin{align*}
\sum_{k=0}^{n-1} (q)_k &=  \lim_{q \to \zeta_n} \frac{1}{1-q^n} \sum_{k=0}^{n-1} q^{-k^2-2k-1}(1-q^{2k+1})\sum_{j=-k}^k (-1)^jq^{j(3j+1)/2}\\
&= \frac{-q}{n} \frac{d}{dq}\Big|_{q= \zeta_n} \sum_{k=0}^{n-1} q^{-k^2-2k-1}(1-q^{2k+1})\sum_{j=-k}^k (-1)^jq^{j(3j+1)/2} \\
&= \frac{q}{n} \frac{d}{dq}\Big|_{q= \zeta_n} \sum_{k=-n}^{n-1}\sum_{j=-k}^k q^{-k^2+j(3j+1)/2}(-1)^j,
\end{align*}
and \eqref{BOPRpropeq1} follows.   In the above we have used the fact that for $q$ a primitive $n$th root of unity and $0 \leq k \leq n-1$, $$\frac{(q^{1-n})_k}{(q^{1+n})_k} =1.$$    For \eqref{BOPRpropeq2}, we use \eqref{indefpair1} in \eqref{b0cq} and perform a similar calculation.   This completes the proof.
\end{proof}

Next we treat \eqref{Cohenex1}.    This follows immediately from the next proposition.    We use the sign function 
\begin{equation*}
{\rm sgn}(k) = 
\begin{cases}
1, & \text{if $k \geq 0$}, \\
-1, & \text{if $k < 0$}.
\end{cases}
\end{equation*} 
\begin{proposition} \label{Cohenex1prop}
If $q$ is a primitive $n$th root of unity we have
\begin{equation} 
\sum_{k=0}^{n-1} (q^2;q^2)_kq^{k+1} = 
\begin{dcases}
\frac{-1}{n^2} \sum_{k=-n}^{n-1} \sum_{j=-k}^k {\rm sgn}(k) (k^2-j(3j+1)/2)(-1)^{k+j}q^{k^2-j(3j+1)/2}, \ \text{$n$ odd}, \\
\frac{-1}{4} \sum_{k=-n}^{n-1} \sum_{j=-k}^k {\rm sgn}(k)(-1)^{k+j}q^{k^2-j(3j+1)/2},  \ \text{$n$ even},
\end{dcases} \label{Cohenex1propeq1}
\end{equation}
and if $q$ is a primitive even $n$th root of unity then 
\begin{equation}
\sum_{k=0}^{n-1} (-q)_k =  \frac{1}{4} \sum_{k=-n}^{n-1} \sum_{j=-k}^k {\rm sgn}(k)(-1)^{k+j}q^{-k^2+j(3j+1)/2}.     \label{Cohenex1propeq2}
\end{equation}
\end{proposition}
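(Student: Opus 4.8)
The plan is to derive both identities from a single specialization of Lemma~\ref{keylemma1}, in exact parallel with the proof of Proposition~\ref{BOPRprop}. Letting $b \to 0$ and $c = -q$ in Lemma~\ref{keylemma1} and simplifying the limit, I expect to obtain that for any Bailey pair $(\alpha_k,\beta_k)$ relative to $q$,
\begin{equation*}
(-1)^{n-1}q^{1-n}\frac{(q^2)_{n-1}}{(-q)_{n-1}}\sum_{k=0}^{n-1}(-q)_k(q^{1-n})_kq^k\beta_k = \sum_{k=0}^{n-1}\frac{(q^{1-n})_k}{(q^{1+n})_k}(-1)^kq^{nk-k^2-k}\alpha_k.
\end{equation*}
The whole argument then turns on the prefactor $(q^2)_{n-1}/(-q)_{n-1}$. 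At $q=\zeta_n$ the numerator $(q^2)_{n-1}=(q)_n/(1-q)$ always has a simple zero coming from the factor $1-q^n$, whereas $(-q)_{n-1}=\prod_{j=1}^{n-1}(1+q^j)$ vanishes \emph{precisely} when $n$ is even, through the factor $1+q^{n/2}$. This dichotomy is the source of the two cases in \eqref{Cohenex1propeq1} and of the restriction to even $n$ in \eqref{Cohenex1propeq2}.

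For \eqref{Cohenex1propeq1} I would insert the Bailey pair \eqref{indefpair1}, so $\beta_k=1$. Using $(q^{1-n})_k=(q)_k$ and $(q)_k(-q)_k=(q^2;q^2)_k$ at $q=\zeta_n$, the $\beta$-side sum becomes $\sum_k(q^2;q^2)_kq^k$, which together with the prefactor power $q^{1-n}=q$ produces the target $\sum_k(q^2;q^2)_kq^{k+1}$, while the $\alpha$-side is an indefinite theta series in $q^{k^2-j(3j+1)/2}$. When $n$ is odd, $(-q)_{n-1}\to 1$ and the prefactor has a simple zero, so the theta side must vanish at $\zeta_n$; as in Proposition~\ref{BOPRprop} I would apply l'H\^opital's rule, with $\frac{d}{dq}$ acting on $q^{k^2-j(3j+1)/2}$ producing the coefficient $k^2-j(3j+1)/2$ and, via $(q)_{n-1}(\zeta_n)=n$, the constant $-1/n^2$. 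When $n$ is even the prefactor is a genuine $0/0$ that I would evaluate directly to $4/(1-\zeta_n)$ (using $(q)_{n-1}(\zeta_n)=n$ and the analogous product in base $q^2$), so no derivative is needed and the constant is $-1/4$.

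For \eqref{Cohenex1propeq2} I would run the same specialization with the pair \eqref{posdefpair2}, so $\beta_k=q^{-k}/(q)_k$; now the $\beta$-side collapses to $\sum_k(-q)_k$ and the $\alpha$-side is a theta series in $q^{-k^2+j(3j+1)/2}$. Restricting to even $n$ makes the prefactor the same finite $0/0$, yielding a direct evaluation with constant $1/4$ after the telescoping $q^{-k^2-2k}(1-q^{2k+1})=q\big(q^{-(k+1)^2}-q^{-k^2}\big)$ and a reindexing that unfolds the single sum into $\sum_{k=-n}^{n-1}$ with the factor ${\rm sgn}(k)$.

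The main obstacle I anticipate is the odd case of \eqref{Cohenex1propeq1}: one must justify replacing the regular factors $(q^{1-n})_k/(q^{1+n})_k$ and $q^{nk}$ by their common value $1$ at $\zeta_n$ before differentiating, i.e.\ show that their derivatives do not contribute to the limit. This rests on the fact that the purely exponential theta sum already vanishes at $\zeta_n$, so the cross terms are of higher order; establishing this vanishing and then carrying out the symmetrization with the correct signs and the constant $-1/n^2$ is the only genuinely delicate bookkeeping. The even cases and all of \eqref{Cohenex1propeq2}, by contrast, are direct evaluations requiring no l'H\^opital step.
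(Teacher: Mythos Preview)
Your approach is essentially identical to the paper's: the same specialization $b\to 0$, $c=-q$ of Lemma~\ref{keylemma1}, the same two Bailey pairs \eqref{indefpair1} and \eqref{posdefpair2} for \eqref{Cohenex1propeq1} and \eqref{Cohenex1propeq2} respectively, and the same odd/even dichotomy with l'H\^opital in the odd case and a direct evaluation of the $0/0$ prefactor (via $\frac{d}{dq}\big|_{q=\zeta_n}(-q)_{n-1}=n^2/4\zeta_n$) in the even case. The paper also passes silently over the point you flag about replacing $(q^{1-n})_k/(q^{1+n})_k$ and $q^{nk}$ by $1$ before differentiating.
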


\begin{proof}
If we take $b \to 0$ and set $c=-q$ in Lemma \ref{keylemma1} we have that if $(\alpha_k,\beta_k)$ is a Bailey pair relative to $q$, then
\begin{equation} \label{b0c-q}
\sum_{k =0}^{n-1} (-q)_k(q^{1-n})_kq^{k+1} \beta_k = -(-q)^{n}\frac{(-q)_{n-1}}{(q^2)_{n-1}}\sum_{k=0}^{n-1} \frac{(q^{1-n})_k}{(q^{1+n})_k}q^{nk - (k^2+k)}(-1)^k\alpha_k.
\end{equation}
Using the Bailey pair \eqref{indefpair1} gives the rational function identity
\begin{equation*}
\begin{aligned}
\sum_{k =0}^{n-1}& (-q)_k(q^{1-n})_kq^{k+1} \\ 
&= -(-q)^{n}\frac{(-q)_{n-1}}{(q)_{n}}\sum_{k=0}^{n-1} \frac{(q^{1-n})_k}{(q^{1+n})_k}q^{nk +k^2}(-1)^k (1-q^{2k+1})\sum_{j=-k}^k (-1)^jq^{-j(3j+1)/2}.
\end{aligned}
\end{equation*}
We first consider the case when $q$ is a primitive odd $n$th root of unity $\zeta_n$.    In this case, we have $(-q)_{n-1} = 1$ and $(q)_{n-1}=n$.   Hence we have
\begin{align*}
\sum_{k=0}^{n-1} (q^2;q^2)_{k}q^{k+1} &= \lim_{q \to \zeta_n} \frac{1}{n(1-q^n)} \sum_{k=0}^{n-1}\sum_{j=-k}^k (-1)^{j+k}q^{k^2-j(3j+1)/2}(1-q^{2k+1}) \\
&= \frac{-q}{n^2} \frac{d}{dq}\Big|_{q = \zeta_n} \sum_{k=-n}^{n-1}\sum_{j=-k}^k {\rm sgn}(k)(-1)^{j+k}q^{k^2-j(3j+1)/2},
\end{align*}
Equation \eqref{Cohenex1propeq1} follows for $n$ odd.

Next assume that $n$ is even.  In this case, we still have $(q)_{n-1} = n$ but now $(-q)_{n-1}=0$.   A short calculation gives that for a primitive even $n$th root of unity $\zeta_n$, we have $\frac{d}{dq}\big|_{q=\zeta_n} (-q)_{n-1} = n^2/4\zeta_n$.   With this in mind, for $q = \zeta_n$ we obtain
\begin{align*}
\sum_{k=0}^{n-1} (q^2;q^2)_{k}q^{k+1} &= \lim_{q \to \zeta_n} \frac{-(-q)_{n-1}}{n(1-q^n)} \sum_{k=0}^{n-1}\sum_{j=-k}^k (-1)^{j+k}q^{k^2-j(3j+1)/2}(1-q^{2k+1}) \\
&= \frac{1}{4} \sum_{k=-n}^{n-1}\sum_{j=-k}^k {\rm sgn}(k)(-1)^{j+k}q^{k^2-j(3j+1)/2}.
\end{align*}

This completes the proof of \eqref{Cohenex1propeq1}.   Equation \eqref{Cohenex1propeq2} follows in a similar manner using the Bailey pair \eqref{posdefpair2} in \eqref{b0c-q}.    In obtaining the left-hand side of \eqref{Cohenex1propeq2} from \eqref{b0c-q} we use the fact that if $q$ is a primitive $n$th root of unity, then for $0 \leq k \leq n-1$ we have
\begin{equation*}
\frac{(q^{1-n})_k}{(q)_k} =1.
\end{equation*}
\end{proof}

We now move on to \eqref{Cohen}, which is an immedaite consequence of the following proposition.
\begin{proposition} \label{Cohenprop}
For $q$ a primitive $n$th root of unity we have
\begin{equation} \label{Cohenpropeq1}
\sum_{k=0}^{n-1} (q)_k(-1)^k = 
\begin{dcases}
\frac{-1}{n^2} \sum_{k=-n}^{n-1} \sum_{j=-k}^k {\rm sgn}(k) (k(3k+1)/2 - j^2)(-1)^{k+j}q^{k(3k+1)/2 - j^2}, \ \text{$n$ odd}, \\
\frac{1}{4} \sum_{k=-n}^{n-1} \sum_{j=-k}^k {\rm sgn}(k)(-1)^{k+j}q^{k(3k+1)/2 - j^2},  \ \text{$n$ even},
\end{dcases}
\end{equation}
and
\begin{equation} \label{Cohenpropeq2}
\begin{aligned}
\sum_{k=0}^{n-1}& (q^2;q^2)_kq^{k+1} \\
&= 
\begin{dcases}
\frac{-1}{n^2} \sum_{k=-n}^{n-1} \sum_{j=-k}^k {\rm sgn}(k) (k(3k+1)/2 - j^2)(-1)^{k+j}q^{-k(3k+1)/2 + j^2}, \ \text{$n$ odd}, \\
\frac{1}{4} \sum_{k=-n}^{n-1} \sum_{j=-k}^k {\rm sgn}(k)(-1)^{k+j}q^{-k(3k+1)/2 + j^2},  \ \text{$n$ even}.
\end{dcases}
\end{aligned}
\end{equation}
\end{proposition}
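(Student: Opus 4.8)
The plan is to follow exactly the template established in the proofs of Propositions \ref{BOPRprop} and \ref{Cohenex1prop}: select an appropriate specialization of Lemma \ref{keylemma1}, substitute one of the Bailey pairs from Lemma \ref{fivepairslemma} to obtain a rational function identity, and then extract the value at a root of unity by detecting a vanishing factor and applying l'Hôpital's rule. For \eqref{Cohenpropeq1} I expect to need the left-hand side $\sum_{k=0}^{n-1}(q)_k(-1)^k$, so I would look for a specialization producing the factor $(q)_k(-1)^k$ in the $\beta$-sum. The natural choice is to set $b\to 0$ and $c=-1$ (rather than $c=q$ or $c=-q$) in Lemma \ref{keylemma1}, which should insert $(-1)_k$-type factors and the sign $(-1)^k$; pairing this with the Bailey pair \eqref{indefpair2}, whose $\alpha_k$ carries the quadratic form $q^{k(3k+1)/2}\sum_{j=-k}^k(-1)^jq^{-j^2}$ and whose $\beta_k=1/(-q)_k$, should produce precisely the theta function $\sum q^{k(3k+1)/2-j^2}$ appearing on the right-hand side.

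The extraction step then splits, as in Proposition \ref{Cohenex1prop}, according to the parity of $n$. After substituting the Bailey pair I will obtain an identity of the form $(\text{LHS})\cdot(1-q^n)=(\text{indefinite theta sum})$, where a boundary factor such as $(-q)_{n-1}$ or $(q)_{n-1}$ appears. For $n$ odd one has $(-q)_{n-1}=1$ and $(q)_{n-1}=n$, so a single application of l'Hôpital's rule against the simple zero of $1-q^n$ yields the $\tfrac{-1}{n^2}$ prefactor together with the logarithmic-derivative weight $k(3k+1)/2-j^2$. For $n$ even the factor $(-q)_{n-1}$ vanishes with derivative $n^2/4\zeta_n$, which cancels the pole differently and gives the $\tfrac14$ prefactor with no weight, exactly as in \eqref{Cohenex1propeq1}. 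The convention $\sum_{j=a}^b f=-\sum_{j=b+1}^{a-1}f$ and the folding of the $k$-sum to range over $-n\le k\le n-1$ with the ${\rm sgn}(k)$ factor are handled precisely as before.

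For \eqref{Cohenpropeq2} the target left-hand side $\sum_{k=0}^{n-1}(q^2;q^2)_kq^{k+1}$ is identical to the left-hand side of \eqref{Cohenex1propeq1}, so I would reuse the computation from Proposition \ref{Cohenex1prop}; the only task is to re-express its right-hand side in the alternative $q^{\mp(k(3k+1)/2-j^2)}$ shape demanded here. This should amount to applying the base-case identity \eqref{Cohen} itself, or equivalently to observing that the indefinite theta series $\sum q^{k^2-j(3j+1)/2}$ and $\sum q^{-k(3k+1)/2+j^2}$ agree after the linear change of summation variables $(k,j)\mapsto(j,k)$ (up to sign and the ${\rm sgn}$ weight), since both parametrize the same quadratic form $k^2+j^2-kj$ shifted by linear terms. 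I expect this reindexing to be the main obstacle: one must verify that the substitution permuting the roles of $k$ and $j$ correctly interchanges the two quadratic forms while preserving the $(-1)^{k+j}$, the ${\rm sgn}(k)$, and the summation ranges, and in particular that it respects the folded range $-n\le k\le n-1$. Once this symmetry is confirmed, \eqref{Cohenpropeq2} follows from \eqref{Cohenex1propeq1} and the proof is complete.
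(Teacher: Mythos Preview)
Your plan for \eqref{Cohenpropeq1} is close in spirit but the specialization is off: taking $b\to 0$ and $c=-1$ in Lemma~\ref{keylemma1} does not produce $\sum_k (q)_k(-1)^k$ on the left. With $\beta_k=1/(-q)_k$ from \eqref{indefpair2} the relevant factor becomes $(-1)_k/(-q)_k=2/(1+q^k)$ rather than $1$. The paper instead takes $b\to\infty$ and $c=-q$, giving \eqref{binftyc-q}; there the $(-q)_k$ in the $\beta$-sum cancels $\beta_k$ exactly, and at a primitive $n$th root of unity one obtains $\sum_{k=0}^{n-1}(q)_k(-1)^k$ directly. Your choice of Bailey pair is correct and the parity split via l'H\^opital is as you describe, so this half is easily repaired.

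The genuine gap is in \eqref{Cohenpropeq2}. Neither of your two proposed routes works. Appealing to \eqref{Cohen} is circular, since Proposition~\ref{Cohenprop} is precisely what the paper uses to deduce \eqref{Cohen}. And the substitution $(k,j)\mapsto(j,k)$ does send the exponent $k^2-j(3j+1)/2$ to $j^2-k(3k+1)/2$, but it also replaces the summation triangle $|j|\le |k|$ by $|k|\le |j|$ and ${\rm sgn}(k)$ by ${\rm sgn}(j)$, so you do not land on the sum appearing in \eqref{Cohenpropeq2}; nor do the quadratic parts $k^2-\tfrac32 j^2$ and $-\tfrac32 k^2+j^2$ coincide with the positive-definite form $k^2+j^2-kj$ you mention. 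The paper's argument is genuinely different here: it applies the Bressoud--Ismail--Stanton change of base (Lemma~\ref{keylemma2}) to the Andrews--Hickerson pair \eqref{indefpair4}, obtaining an evaluation of $\sum_k (q^2;q^2)_k(-1)^k q^{-(k+1)^2}$ whose right-hand side matches that of \eqref{Cohenpropeq1}, and then sends $q\mapsto q^{-1}$. That change-of-base lemma is the essential missing ingredient; the introduction in fact flags Cohen's identity as one of the two cases where it cannot be avoided.
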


\begin{proof}
Applying Lemma \ref{keylemma1} with $b \to \infty$ and $c=-q$,   we find that if $(\alpha_n,\beta_n)$ is a Bailey pair relative to $q$, then
\begin{equation} \label{binftyc-q}
\sum_{k =0}^{n-1} (-q)_k(q^{1-n})_k(-1)^kq^{nk} \beta_k = \frac{(-q)_{n-1}}{(q^2)_{n-1}}\sum_{k=0}^{n-1} \frac{(q^{1-n})_k}{(q^{1+n})_k}q^{nk}(-1)^k\alpha_k.
\end{equation}
If we use the Bailey pair \eqref{indefpair2} and argue as usual, we obtain \eqref{Cohenpropeq1}.

Now we take Lemma \ref{keylemma2} and insert the Bailey pair \eqref{indefpair4} to obtain the identity
\begin{equation*}
\begin{aligned}
\sum_{k=0}^{n-1}&(q^{2-2n};q^2)_k(-1)^kq^{n(2k+1)-k^2-2k-1}  \\
&= \frac{(-q)_{n-1}}{(q)_{n}}\sum_{k=0}^{n-1} \frac{(q^{1-n})_k}{(q^{1+n})_k}(-1)^kq^{nk+k(3k+1)/2} (1-q^{2k+1}) \sum_{j=-k}^k (-1)^jq^{-j^2}.
\end{aligned}
\end{equation*}
If $q$ is a primitive $n$th root of unity, this leads to
\begin{equation*} \label{CohenRHSinverse}
\begin{aligned}
\sum_{k=0}^{n-1}& (q^2;q^2)_k(-1)^kq^{-k^2-2k-1} \\
&= 
\begin{dcases}
\frac{-1}{n^2} \sum_{k=-n}^{n-1} \sum_{j=-k}^k {\rm sgn}(k) (k(3k+1)/2 - j^2)(-1)^{k+j}q^{k(3k+1)/2 - j^2}, \ \text{$n$ odd}, \\
\frac{1}{4} \sum_{k=-n}^{n-1} \sum_{j=-k}^k {\rm sgn}(k)(-1)^{k+j}q^{k(3k+1)/2 - j^2},  \ \text{$n$ even}.
\end{dcases}
\end{aligned}
\end{equation*}
Replacing $q$ by $q^{-1}$ gives \eqref{Cohenpropeq2}.
\end{proof}

We conclude with \eqref{Cohenex2}, which will follow from the proposition below combined with \eqref{Cohenpropeq2}.

\begin{proposition} \label{Cohenex2prop}
If $q$ is a primitive odd $n$th root of unity, we have
\begin{equation*} 
\label{Cohenex2propeq}
\sum_{n \geq 0} (q;q^2)_n = \frac{-1}{n^2} \sum_{k=-n}^{n-1} \sum_{j=-k}^k {\rm sgn}(k) (k(3k+1)/2 - j^2)(-1)^{k+j}q^{-k(3k+1)/2 + j^2}.
\end{equation*}
\end{proposition}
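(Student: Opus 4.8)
The plan is to derive the stated formula for $\sum_{n\ge 0}(q;q^2)_n$ from Lemma \ref{keylemma3} together with the Bailey pair \eqref{eq:BP1.4}, in exact parallel with the treatment of \eqref{Cohenpropeq2} in Proposition \ref{Cohenprop}; the formula then combines with \eqref{Cohenpropeq2} to give \eqref{Cohenex2} in the odd case. First I would insert \eqref{eq:BP1.4} into Lemma \ref{keylemma3}. On the left-hand side the factor $(-q)_{2k}$ cancels the factor $(-q;q)_{2k}$ sitting in the denominator of $\beta_k(q^2)=\frac{(q;q^2)_k}{(q^2;q^2)_k(-q;q)_{2k}}$, leaving
\begin{equation*}
\sum_{k=0}^{n-1}\frac{(q;q^2)_k}{(q^2;q^2)_k}(q^{2-2n};q^2)_k(-1)^kq^{2nk-k^2}.
\end{equation*}
On the right-hand side I would use $1-q^{4k+2}=(1-q^{2k+1})(1+q^{2k+1})$ and $\frac{1+q}{1-q^2}=\frac{1}{1-q}$ to clear the $(1+q^{2k+1})$ in the denominator coming from $\alpha_n'$, together with $(q^2)_{n-1}(1-q)=(q)_n$ to combine the prefactors. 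The crucial observation is that the resulting right-hand side is \emph{identical} to the one produced from Lemma \ref{keylemma2} and \eqref{indefpair4} in the proof of \eqref{Cohenpropeq2}, namely
\begin{equation*}
\frac{(-q)_{n-1}}{(q)_n}\sum_{k=0}^{n-1}\frac{(q^{1-n})_k}{(q^{1+n})_k}(-1)^kq^{nk+k(3k+1)/2}(1-q^{2k+1})\sum_{j=-k}^k(-1)^jq^{-j^2}.
\end{equation*}

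Next I would specialize to a primitive odd $n$th root of unity. Since the right-hand side is the very same rational function as in the proof of Proposition \ref{Cohenprop}, the identical l'H\^opital computation shows that at $q=\zeta_n$ it equals $\frac{-1}{n^2}\sum_{k=-n}^{n-1}\sum_{j=-k}^k{\rm sgn}(k)(k(3k+1)/2-j^2)(-1)^{k+j}q^{k(3k+1)/2-j^2}$. For the left-hand side I would use that, because $n$ is odd, $q^2$ is again a primitive $n$th root of unity, so $q^{2-2n}=q^2$ gives $(q^{2-2n};q^2)_k=(q^2;q^2)_k$, which cancels the denominator; together with $q^{2nk}=1$ this collapses the left-hand side to $\sum_{k=0}^{n-1}(q;q^2)_k(-1)^kq^{-k^2}$. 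Equating the two evaluations gives
\begin{equation*}
\sum_{k=0}^{n-1}(q;q^2)_k(-1)^kq^{-k^2}=\frac{-1}{n^2}\sum_{k=-n}^{n-1}\sum_{j=-k}^k{\rm sgn}(k)(k(3k+1)/2-j^2)(-1)^{k+j}q^{k(3k+1)/2-j^2}.
\end{equation*}

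Finally I would replace $q$ by $q^{-1}$. Using the elementary identity $(q^{-1};q^{-2})_k=(-1)^kq^{-k^2}(q;q^2)_k$, the left-hand side becomes $\sum_{k=0}^{n-1}(q;q^2)_k$, while the right-hand side becomes the claimed series with exponent $q^{-k(3k+1)/2+j^2}$; since the inverse of a primitive odd root of unity is again one, this costs nothing. To finish I would note the truncation is automatic: for such $q$ the factor $1-q^{2k+1}$ vanishes at $k=(n-1)/2$, so $(q;q^2)_m=0$ for $m\ge (n+1)/2$ and hence $\sum_{k=0}^{n-1}(q;q^2)_k=\sum_{m\ge 0}(q;q^2)_m$. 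The main obstacle here is bookkeeping rather than anything conceptual: one must check carefully that feeding \eqref{eq:BP1.4} into Lemma \ref{keylemma3} reproduces \emph{exactly} the right-hand side appearing in the \eqref{Cohenpropeq2} computation, since it is precisely this coincidence that lets us borrow the l'H\^opital evaluation wholesale rather than repeat it, and that the two $q\mapsto q^{-1}$ simplifications of $(q;q^2)_k$ and of the theta exponent are applied consistently.
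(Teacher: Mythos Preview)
Your proposal is correct and follows essentially the same route as the paper: insert the Bailey pair \eqref{eq:BP1.4} into Lemma \ref{keylemma3}, simplify both sides, specialize to a primitive odd $n$th root of unity, apply l'H\^opital, and then replace $q$ by $q^{-1}$. Your added remark that the resulting right-hand side coincides verbatim with the one produced from Lemma \ref{keylemma2} and \eqref{indefpair4} in the proof of \eqref{Cohenpropeq2} is a nice economy; the paper simply redoes that l'H\^opital computation rather than pointing out the coincidence.
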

\begin{proof}
Inserting \eqref{eq:BP1.4} into Lemma \ref{keylemma3} gives that for $q$ any primitive $n$th root of unity,
\begin{align*}
\sum_{k=0}^{n-1} (q;q^2)_k(-1)^kq^{-k^2}
&= \frac{(-q)_{n-1}}{(q)_{n}} \sum_{k=0}^{n-1}(-1)^kq^{\frac{k(3k+1)}{2}} (1-q^{2k+1}) \sum_{j=-k}^k (-1)^j q^{-j^2}.
\end{align*}
When $q= \zeta_n$, a primitive odd $n$th root of unity, we obtain
\begin{align*}
\sum_{k=0}^{n-1} (q;q^2)_k(-1)^kq^{-k^2}
&= \lim_{q \to \zeta_n} \frac{1}{n(1-q^n)} \sum_{k=0}^{n-1}(-1)^kq^{\frac{k(3k+1)}{2}} (1-q^{2k+1}) \sum_{j=-k}^k (-1)^j q^{-j^2}\\
&=\frac{-q}{n^2} \frac{d}{dq}\Big|_{q = \zeta_n} \sum_{k=0}^{n-1}(-1)^kq^{\frac{k(3k+1)}{2}} (1-q^{2k+1}) \sum_{j=-k}^k (-1)^j q^{-j^2}\\
&=\frac{-1}{n^2} \sum_{k=-n}^{n-1} \sum_{j=-k}^k {\rm sgn}(k) (k(3k+1)/2 - j^2)(-1)^{k+j}q^{k(3k+1)/2 - j^2}.
\end{align*}
Replacing $q$ by $q^{-1}$ gives the desired result.
\end{proof}

\section{Proofs of Theorems \ref{theorem1} -- \ref{theorem4}}
In this section we use the Bailey chain to prove the generalizations of the identities of Cohen and Bryson--Ono--Pitman--Rhoades contained in Theorems \ref{theorem1} -- \ref{theorem4}.        

\begin{proof}[Proof of Theorem \ref{theorem1}]
 We begin by applying Lemma \ref{Baileylemmabcinfty} $m-1$ times to the Bailey pair in \eqref{indefpair1} for $m \geq 1$.   The result is the Bailey pair relative to $q$,
\begin{equation}
\label{eq:alpha1}
\alpha_n = \frac{q^{(m+1)n^2+mn}(1-q^{2n+1})}{1-q} \sum_{j=-n}^n (-1)^jq^{-j(3j+1)/2}
\end{equation}    
and
\begin{align}
\beta_n =\beta_{n_{m}} &= \sum_{n_{m} \geq \cdots \geq n_1 \geq 0} \frac{q^{n_{m-1}^2 + n_{m-1} + \cdots + n_1^2+n_1}}{(q)_{n_{m} - n_{m-1}}\cdots(q)_{n_2-n_1}} \nonumber \\
&= \frac{1}{(q)_{n_{m}}}\sum_{n_m \geq \cdots \geq n_1 \geq 0} (q)_{n_1} \prod_{i=1}^{m-1} q^{n_i^2+n_i} \begin{bmatrix} n_{i+1} \\ n_i \end{bmatrix}. \label{eq:beta1}
\end{align}  
Inserting this into \eqref{b0cq} and arguing as in the proof of \eqref{BOPR}, we find that for a primitive $n$th root of unity $q$,
\begin{equation} \label{theorem1halfway}
\begin{aligned}
&\sum_{n-1 \geq n_m \geq \dots \geq n_1 \geq 0} (q)_{n_m}(q)_{n_1}q^{n_m+1}\prod_{i=1}^{m-1} q^{n_i^2+n_i} \begin{bmatrix} n_{i+1} \\ n_i \end{bmatrix} \\
&= \frac{-1}{n} \sum_{k=-n}^{n-1}\sum_{j=-k}^k \left(mk^2 + (m-1)k -j(3j+1)/2\right)q^{mk^2 + (m-1)k -j(3j+1)/2}(-1)^j.
\end{aligned}
\end{equation}

Next we apply Lemma \ref{Baileylemmabc0} $m-1$ times to the Bailey pair in \eqref{posdefpair2}.   The result is the Bailey pair relative to $q$,
\begin{equation}
\label{eq:alpha2}
\alpha_n = \frac{q^{-(m-1)n^2-mn}(1-q^{2n+1})}{1-q}  \sum_{j=-n}^n (-1)^jq^{j(3j+1)/2}
\end{equation}
and
\begin{align}
\beta_n =\beta_{n_{m}} &= (-1)^{n_m}q^{-\binom{n_m+1}{2} - n_m}\sum_{n_{m} \geq \cdots \geq n_1 \geq 0} \frac{q^{-n_mn_{m-1}  - n_{m-1} - \cdots - n_2n_1 - n_1}(-1)^{n_1}q^{\binom{n_1+1}{2}}}{(q)_{n_{m} - n_{m-1}}\cdots(q)_{n_2-n_1}(q)_{n_1}} \nonumber \\
&= \frac{(-1)^{n_m}q^{-\binom{n_m+1}{2} - n_m}}{(q)_{n_{m}}}\sum_{n_m \geq \cdots \geq n_1 \geq 0} (-1)^{n_1}q^{\binom{n_1+1}{2}} \prod_{i=1}^{m-1} q^{-n_in_{i+1} - n_i} \begin{bmatrix} n_{i+1} \\ n_i \end{bmatrix}. \label{eq:beta2}
\end{align} 
Inserting this in \eqref{b0cq} and arguing as usual gives that for $q$ a primitive $n$th root of unity,
\begin{equation*}
\begin{aligned}
&\sum_{n-1 \geq n_m \geq \cdots \geq n_1 \geq 0} (-1)^{n_m+n_1}q^{-\binom{n_m+1}{2}+\binom{n_1+1}{2}} (q)_{n_m}\prod_{i=1}^{m-1} q^{-n_in_{i+1} - n_i} \begin{bmatrix} n_{i+1} \\ n_i \end{bmatrix} \\
&= \frac{-1}{n} \sum_{k=-n}^{n-1}\sum_{j=-k}^k \left(mk^2 + (m-1)k -j(3j+1)/2\right)q^{-mk^2 - (m-1)k + j(3j+1)/2}(-1)^j.
\end{aligned}
\end{equation*}
Now, in the above we let $q=1/q$ and invoke \eqref{inverseqfac} along with the fact that 
\begin{equation*}
\begin{bmatrix} n \\ k \end{bmatrix}_{q^{-1}} =  \begin{bmatrix} n \\ k \end{bmatrix} q^{k^2-nk}.
\end{equation*}
We obtain that if $q$ is a primitive $n$th root of unity, then 
\begin{equation*}
\begin{aligned}
&\sum_{n-1 \geq n_m \geq \cdots \geq n_1 \geq 0} (-1)^{n_1}q^{-\binom{n_1+1}{2}} (q)_{n_m}\prod_{i=1}^{m-1} q^{n_i^2+n_i} \begin{bmatrix} n_{i+1} \\ n_i \end{bmatrix} \\
&= \frac{-1}{n} \sum_{k=-n}^{n-1}\sum_{j=-k}^k \left(mk^2 + (m-1)k -j(3j+1)/2\right)q^{(m+1)k^2 + mk - j(3j+1)/2}(-1)^j.
\end{aligned}
\end{equation*}
Comparing with \eqref{theorem1halfway} gives the result.
\end{proof}

We now turn to the proof of Theorem \ref{theorem2}, which follows a similar principle.

\begin{proof}[Proof of Theorem \ref{theorem2}]
We start with the same Bailey pair as in the proof of Theorem \ref{theorem1}, that is the $(\alpha_n,\beta_n)$ given in \eqref{eq:alpha1} and \eqref{eq:beta1}, but now we insert it into \eqref{b0c-q}. The result is
\begin{align*}
&\sum_{n_m =0}^{n-1} (-q)_{n_m}(q^{1-n})_{n_m} q^{n_m+1} \frac{1}{(q)_{n_{m}}}\sum_{n_m \geq \cdots \geq n_1 \geq 0} (q)_{n_1} \prod_{i=1}^{m-1} q^{n_i^2+n_i} \begin{bmatrix} n_{i+1} \\ n_i \end{bmatrix} \\
&= -(-q)^{n}\frac{(-q)_{n-1}}{(q)_{n}}\sum_{k=0}^{n-1} \frac{(q^{1-n})_k}{(q^{1+n})_k}(-1)^k q^{nk+mk^2+(m-1)k}(1-q^{2k+1}) \sum_{j=-k}^k (-1)^jq^{-j(3j+1)/2}.
\end{align*}
Arguing as usual, we get that for a primitive even $n$th root of unity $q$,
\begin{align}
&\sum_{n-1 \geq n_m \geq \dots \geq n_1 \geq 0} (-q)_{n_m} (q)_{n_1} q^{n_m+1}  \prod_{i=1}^{m-1} q^{n_i^2+n_i} \begin{bmatrix} n_{i+1} \\ n_i \end{bmatrix} \nonumber \\
&= \lim_{q \to \zeta_n} \frac{-(-q)_{n-1}}{n(1-q^n)} \sum_{k=0}^{n-1}\sum_{j=-k}^k (-1)^{j+k}q^{mk^2+(m-1)k-j(3j+1)/2}(1-q^{2k+1}) \nonumber \\
&= \frac{1}{4} \sum_{k=-n}^{n-1}\sum_{j=-k}^k {\rm sgn}(k)(-1)^{j+k}q^{mk^2+(m-1)k-j(3j+1)/2}. \label{theorem2halfway}
\end{align}

Now we take the Bailey pair of \eqref{eq:alpha2} and \eqref{eq:beta2} and insert it into \eqref{b0c-q}. This gives
\begin{align*}
&\sum_{n-1 \geq n_m \geq \cdots \geq n_1 \geq 0}  (-q)_{n_m} (-1)^{n_1+n_m} (q^{1-n})_{n_m} \frac{q^{1-\binom{n_m+1}{2}+\binom{n_1+1}{2}}}{(q)_{n_{m}}}  \prod_{i=1}^{m-1} q^{-n_in_{i+1} - n_i} \begin{bmatrix} n_{i+1} \\ n_i \end{bmatrix} \\
&= -(-q)^{n}\frac{(-q)_{n-1}}{(q)_{n}}\sum_{k=0}^{n-1} \frac{(q^{1-n})_k}{(q^{1+n})_k}(-1)^kq^{nk - m k^2 -(m+1)k}(1-q^{2k+1}) \sum_{j=-k}^k (-1)^jq^{j(3j+1)/2}.
\end{align*}
Dividing both sides by $q$ and rearranging, we obtain that for a primitive even $n$th root of unity $q$,
\begin{align*}
&\sum_{n-1 \geq n_m \geq \cdots \geq n_1 \geq 0}  (-q)_{n_m} (-1)^{n_1+n_m} q^{-\binom{n_m+1}{2}+\binom{n_1+1}{2}} \prod_{i=1}^{m-1} q^{-n_in_{i+1} - n_i} \begin{bmatrix} n_{i+1} \\ n_i \end{bmatrix} \\
&= \lim_{q \to \zeta_n} \frac{(-q)_{n-1}}{n(1-q^n)} \sum_{k=0}^{n-1} \sum_{j=-k}^k (-1)^kq^{- m k^2 -(m-1)k}(1-q^{-(2k+1)})  (-1)^jq^{j(3j+1)/2} \nonumber \\
&= -\frac{1}{4} \sum_{k=-n}^{n-1}\sum_{j=-k}^k {\rm sgn}(k)(-1)^{j+k}q^{-mk^2-(m-1)k+j(3j+1)/2}.
\end{align*}

Letting $q=1/q$ in the above gives
\begin{equation*}
\begin{aligned}
&\sum_{n-1 \geq n_m \geq \cdots \geq n_1 \geq 0} (-q)_{n_m} (-1)^{n_1+n_m} q^{-\binom{n_1+1}{2}} \prod_{i=1}^{m-1} q^{n_i^2+n_i} \begin{bmatrix} n_{i+1} \\ n_i \end{bmatrix} \\
&= -\frac{1}{4} \sum_{k=-n}^{n-1}\sum_{j=-k}^k {\rm sgn}(k)(-1)^{j+k}q^{mk^2+(m-1)k-j(3j+1)/2}.
\end{aligned}
\end{equation*}
Comparing with \eqref{theorem2halfway} now gives the result.
\end{proof}

\begin{proof}[Proof of Theorem \ref{theorem3}]
We begin by applying Lemma \ref{Baileylemmabcinfty} $m-1$ times to the Bailey pair in \eqref{indefpair2} for $m \geq 1$.    The result is the Bailey pair relative to $q$,
\begin{equation*}
\alpha_n = \frac{q^{n(3n+1)/2 + (m-1)(n^2+n)}(1-q^{2n+1})}{1-q}\sum_{j=-n}^n(-1)^jq^{-j^2}
\end{equation*}
and
\begin{equation*}
\beta_n = \beta_{n_m} = \frac{1}{(q)_{n_m}}\sum_{n_m \geq \cdots \geq n_1 \geq 0} \frac{(q)_{n_1}}{(-q)_{n_1}}\prod_{i=1}^{m-1} q^{n_i^2+n_i} \begin{bmatrix} n_{i+1} \\ n_i \end{bmatrix}.
\end{equation*}
Inserting this into \eqref{binftyc-q}, letting $q$ be a primitive $n$th root of unity, and calculating as usual, we obtain
\begin{equation} \label{theorem3proofbigeq1}
\begin{aligned}
&\sum_{n-1 \geq n_m \geq \cdots \geq n_1 \geq 0} (-q^{n_1+1})_{n_m-n_1}(-1)^{n_m}(q)_{n_1}\prod_{i=1}^{m-1} q^{n_i^2+n_i} \begin{bmatrix} n_{i+1} \\ n_i \end{bmatrix} \\
&=
\begin{dcases}
\frac{-1}{n^2} \sum_{k=-n}^{n-1} \sum_{j=-k}^k {\rm sgn}(k) (k(3k+1)/2 +(m-1)(k^2+k) - j^2)\\
\qquad \qquad \qquad \times (-1)^{k+j}q^{k(3k+1)/2 + (m-1)(k^2+k)- j^2}, \qquad \text{$n$ odd}, \\
\frac{1}{4} \sum_{k=-n}^{n-1} \sum_{j=-k}^k {\rm sgn}(k)(-1)^{k+j}q^{k(3k+1)/2 +(m-1)(k^2+k)- j^2},  \ \ \text{$n$ even}.
\end{dcases}
\end{aligned}
\end{equation}
Note that for $m > 1$ the multisum on the left-hand side above does not truncate at odd roots of unity without the upper bound $n-1$.   

Next we let $q=q^2$ and apply Lemma \ref{Baileylemmabcinfty} $m-1$ times to the Bailey pair in \eqref{indefpair4}.   The result is the Bailey pair relative to $q^2$,
\begin{equation*}
\alpha_n = \frac{q^{2mn^2+(2m-1)n}(1-q^{2n+1})}{1-q}\sum_{j=-n}^n (-1)^jq^{-j^2}
\end{equation*}
and
\begin{equation*}
\beta_n = \beta_{n_m} = \frac{1}{(q^2;q^2)_{n_m}}\sum_{n_m \geq \cdots \geq n_1 \geq 0} \frac{(q^2;q^2)_{n_1}}{(-q^2)_{2n_1}} \prod_{i=1}^{m-1} q^{2n_i^2+2n_i} \begin{bmatrix} n_{i+1} \\ n_i \end{bmatrix}_{q^2}.
\end{equation*}
Inserting this pair into Lemma \ref{keylemma2}, we obtain
\begin{equation*}
\begin{aligned}
&\sum_{n-1 \geq n_m \geq \cdots \geq n_1 \geq 0} \frac{(-q^{2n_1+2})_{2n_m-2n_1}(q^{2-2n};q^2)_{n_m}(-1)^{n_m}(q^2;q^2)_{n_1}}{(q^2;q^2)_{n_m}} \\
& \qquad \qquad \qquad\times q^{n(2n_m+1)-(n_m+1)^2} \prod_{i=1}^{m-1} q^{2n_i^2+2n_i} \begin{bmatrix} n_{i+1} \\ n_i \end{bmatrix}_{q^2} \\
&= \frac{(-q)_{n-1}}{(q)_{n}}\sum_{k=0}^{n-1} \sum_{j=-k}^k \frac{(q^{1-n})_k}{(q^{1+n})_k}(-1)^{k+j}q^{nk +k(3k+1)/2 +(2m-2)(k^2+k)-j^2}(1-q^{2k+1}).
\end{aligned}
\end{equation*}

Now we wish to let $q$ be a primitive $n$th root of unity.    On the left-hand side, if $n$ is odd, then $q^2$ is also a primitive $n$th root of unity and so
\begin{equation*}
\frac{(q^{2-2n};q^2)_{n_m}}{(q^2;q^2)_{n_m}} = 1. 
\end{equation*}
If $n$ is even, then this term takes the form $0/0$ when $n_m \geq n/2$.    But then either $n_1 \geq n/2$, in which case $(q^2;q^2)_{n_1} = 0$, or $n_1 < n/2$, in which case $(-q^{2n_1+2})_{2n_m - 2n_1} = 0$.    So this is never an issue (and in fact, the sum actually truncates at $n/2-1$).    The right-hand side evaluates as usual, and we obtain
\begin{equation} \label{theorem3proofbigeq2}
\begin{aligned}
&\sum_{n-1 \geq n_m \geq \cdots \geq n_1 \geq 0} (-q^{2n_1+2})_{2n_m-2n_1}(-1)^{n_m}q^{-(n_m+1)^2}{(q^2;q^2)_{n_1}} \prod_{i=1}^{m-1} q^{2n_i^2+2n_i} \begin{bmatrix} n_{i+1} \\ n_i \end{bmatrix}_{q^2} \\
&= \begin{dcases}
\frac{-1}{n^2} \sum_{k=-n}^{n-1} \sum_{j=-k}^k {\rm sgn}(k) (k(3k+1)/2 +(2m-2)(k^2+k) - j^2)
\\ \qquad \qquad \qquad \times (-1)^{k+j}q^{k(3k+1)/2 + (2m-2)(k^2+k)- j^2}, \qquad \text{$n$ odd}, \\
\frac{1}{4} \sum_{k=-n}^{n-1} \sum_{j=-k}^k {\rm sgn}(k)(-1)^{k+j}q^{k(3k+1)/2 +(2m-2)(k^2+k)- j^2},  \ \ \text{$n$ even}.
\end{dcases}
\end{aligned}
\end{equation}
Comparing this with \eqref{theorem3proofbigeq1} at $m \mapsto 2m-1$ gives the result.   
\end{proof}

\begin{proof}[Proof of Theorem \ref{theorem4}]
Apply Lemma \ref{Baileylemmabcinfty} $m-1$ times with $q=q^2$ to the Bailey pair in \eqref{eq:BP1.4}. We obtain the following Bailey pair relative to $q^2$:
\begin{equation*}
\alpha_n = \frac{q^{2n^2}(1-q^{4n+2})}{1-q^2}\sum_{j=-n}^n (-1)^jq^{-j^2}
\end{equation*}
and
\begin{equation*}
\beta_n = \beta_{n_m} = \frac{1}{(q^2;q^2)_{n_m}}\sum_{n_m \geq \cdots \geq n_1 \geq 0} \frac{(q;q^2)_{n_1}}{(-q)_{2n_1}} \prod_{i=1}^{m-1} q^{2n_i^2+2n_i} \begin{bmatrix} n_{i+1} \\ n_i \end{bmatrix}_{q^2}.
\end{equation*}
Inserting this pair into Lemma \ref{keylemma3}, and taking $q$ to be a primitive $n$th odd root of unity leads to
\begin{equation*}
\begin{aligned}
\sum_{n-1 \geq n_m \geq \cdots \geq n_1 \geq 0} &(-q^{2n_1+1})_{2n_m-2n_1}(-1)^{n_m}q^{-n_m^2}(q;q^2)_{n_1} \prod_{i=1}^{m-1} q^{2n_i^2+2n_i} \begin{bmatrix} n_{i+1} \\ n_i \end{bmatrix}_{q^2}\\
&=\frac{(-q)_{n-1}}{(q)_{n}}\sum_{k=0}^{n-1} \sum_{j=-k}^k (-1)^{k+j}q^{k(3k+1)/2 +(2m-2)(k^2+k)-j^2}(1-q^{2k+1}).
\end{aligned}
\end{equation*}
Arguing as usual gives
\begin{equation*} \label{theorem4proofbigeq}
\begin{aligned}
\sum_{n-1 \geq n_m \geq \cdots \geq n_1 \geq 0} &(-q^{2n_1+1})_{2n_m-2n_1}(-1)^{n_m}q^{-n_m^2}(q;q^2)_{n_1} \prod_{i=1}^{m-1} q^{2n_i^2+2n_i} \begin{bmatrix} n_{i+1} \\ n_i \end{bmatrix}_{q^2}\\
&=\frac{-1}{n^2} \sum_{k=-n}^{n-1} \sum_{j=-k}^k {\rm sgn}(k) (k(3k+1)/2 +(2m-2)(k^2+k) - j^2)
\\ &\qquad \qquad \qquad \times (-1)^{k+j}q^{k(3k+1)/2 + (2m-2)(k^2+k)- j^2}.
\end{aligned}
\end{equation*}
Comparing this with \eqref{theorem3proofbigeq2} gives the result.   
\end{proof}

\section*{Declarations}

The authors declare that there are no conflicts of interest.

The authors were supported by the SNSF Eccellenza grant PCEFP2 202784 and the ANR project Combin\'e (ANR-19-CE48-0011).

\end{document}